\newcounter{cequation}[section]
\newtheorem{theorem}[cequation]{Theorem}
\newtheorem*{theorem*}{Theorem}
\newtheorem{lemma}[cequation]{Lemma}
\newtheorem{corollary}[cequation]{Corollary}
\newtheorem{proposition}[cequation]{Proposition}
\theoremstyle{definition}
\newtheorem{example}[cequation]{Example}
\newtheorem{definition}[cequation]{Definition}
\newtheorem*{definition*}{Definition}
\newtheorem{question}[cequation]{Question}
\newtheorem{problem}[cequation]{Problem}
\newtheorem*{notation*}{Notation}
\theoremstyle{remark}
\newtheorem{remark}[cequation]{Remark}
\makeatletter\@addtoreset{equation}{section}
\makeatletter\@addtoreset{section}{part}
\def \O {\mathcal{O}}
\def \CC {\mathbb{C}}
\def \P {\mathbb{P}}
\def \PP {\mathbb{P}}
\def \QQ {\mathbb{Q}}
\def \Dbcoh {\mathbf{D}^b}
\def \Pic {\mathrm{Pic}\,}
\def \hh {\mathrm{h}}
\def \Bs {\mathrm{Bs}\,}
\def \ge {\geqslant}
\def \le {\leqslant}
\title{Hodge level for weighted complete intersections}
\author{Victor Przyjalkowski and Constantin Shramov}
\address{\emph{Victor Przyjalkowski}
\newline
\textnormal{Steklov Mathematical Institute of RAS, 8 Gubkina street, Moscow 119991, Russia.
}
\newline
\textnormal{National Research University Higher School of Economics, Laboratory of Mirror Symmetry, NRU HSE, 6 Usacheva street, Moscow, 117312, Russia.
}
\newline
\textnormal{\texttt{victorprz@mi.ras.ru, victorprz@gmail.com}}}
\address{\emph{Constantin Shramov}
\newline
\textnormal{Steklov Mathematical Institute of RAS,
8 Gubkina street, Moscow 119991, Russia.
}
\newline
\textnormal{National Research University Higher School of Economics, Laboratory of Algebraic Geometry, NRU HSE, 6 Usacheva str., Moscow, 117312, Russia.
}
\newline
\textnormal{\texttt{costya.shramov@gmail.com}}}
\thanks{
Both authors were partially supported by Young Russian Mathematics award.
Victor Przyjalkowski was supported by
Laboratory of Mirror Symmetry NRU HSE, RF government  grant, ag. \textnumero 14.641.31.0001.
Constantin Shramov was supported by the
HSE University Basic Research Program,
Russian Academic Excellence Project~\mbox{``5-100''},
and by the Foundation for the
Advancement of Theoretical Physics and Mathematics ``BASIS''}
\begin{document}

\begin{abstract}
We give lower bounds for Hodge numbers of smooth well formed Fano weighted complete intersections.
In particular, we compute their Hodge level, that is, the maximal distance between non-trivial Hodge numbers in the same row
of the Hodge diamond.
This allows us to classify varieties
whose Hodge numbers are like that of a projective space, of a curve, or of a Calabi--Yau variety of low dimension.
\end{abstract}

\maketitle

\section{Introduction}

Let $X$ be a smooth projective $n$-dimensional variety defined over the field of complex numbers.
Hodge numbers of $X$ are among its most basic invariants. Thus it is interesting to understand the situation when
they satisfy some minimality conditions.

\begin{definition}
We say that $X$ is \emph{$\QQ$-homologically minimal},
if its Hodge numbers are as small as possible, that is,
the same as those of the $n$-dimensional projective space.
\end{definition}

An old problem is to characterize varieties that satisfy a stronger property
(for which being $\QQ$-homologically minimal is a necessary condition): namely, varieties whose
cohomology ring over~$\mathbb{Z}$ is isomorphic to that of~$\P^n$. Fujita \cite{Fujita} showed that a smooth
Fano variety of dimension at most five with the latter
property is actually isomorphic to~$\P^n$. Hirzebruch and Kodaira \cite{HirzebruchKodaira}
proved the same result under different additional conditions;
see also the work \cite{KobayashiOchiai} of Kobayashi and Ochiai for another result of this kind.

As for varieties that are just $\QQ$-homologically minimal,
a smooth odd-dimensional quadric gives an example of such a variety that is not isomorphic to a projective space.
It was checked by Ewing and Moolgavkar~\cite{EM} that there are no other $\QQ$-homologically
minimal varieties among smooth complete intersections (in usual projective spaces).
One can see from the classification of smooth del Pezzo surfaces and Fano threefolds (see~$\mbox{\cite[\S12.2]{IP99}}$) that
in dimension $2$ the only variety like this is $\P^2$, while in dimension $3$ there are four (families of) examples:
the projective space $\P^3$, the quadric, the del Pezzo threefold~$V_5$ of anticanonical degree $40$, and a Fano variety
$V_{22}$ of Fano index $1$ and anticanonical degree $22$; while the first three of these threefolds are unique in their
deformation classes, the fourth family is $6$-dimensional.
Wilson \cite{Wil86} proved that in dimension
$4$ every smooth $\QQ$-homologically minimal Fano variety is isomorphic to $\P^4$, with a possible (unlikely) exception of
varieties with certain explicitly described properties.
There are further examples of $\QQ$-homologically minimal fivefolds (see below).

Another property that implies $\QQ$-homological minimality of $X$ is the existence in the derived category $\Dbcoh(X)$ of coherent sheaves on $X$ of a full exceptional
collection of the minimal possible length, that is, of length $n+1$;
the fact that $n+1$ is indeed the minimal possible length
follows from additivity of Hochschild
homology under semi-orthogonal decompositions,
see~\cite[Corollary~7.5]{Kuz09},
and Hochschild--Kostant--Rosenberg theorem.
It is known that such exceptional collection exists for~$\P^n$ itself (see~\cite{Be78});
for odd dimensional-quadrics (see~\cite{Ka88});
for the varieties $V_5$ (see~\cite{Or91}) and $V_{22}$ (see~\cite{Kuz96}).
It was proved in~\cite{GKMS13} that the only Fano fourfold $X$ with a full exceptional collection
{of length~$5$} in~\mbox{$\Dbcoh(X)$} is~$\P^4$. There are three more examples of Fano fivefolds that have full exceptional
collections {of length~$6$} in derived categories of coherent sheaves, see~\mbox{\cite[\S6]{Kuz06}} (cf.~also~\mbox{\cite[\S1.4]{Kuz18}});
in particular, these varieties are $\QQ$-homologically minimal. A conjecture of Bondal
and Orlov
predicts that a Fano variety
$X$ of even dimension $n$  with a full exceptional collection in $\Dbcoh(X)$ of length $n+1$ is isomorphic to a projective space.

\begin{remark}
In~\cite{Wil86} and~\cite{GKMS13}, different terminology was used: $\QQ$-homologically minimal varieties were called \emph{$\QQ$-homology projective spaces}.
\end{remark}

A slightly more general situation is described by the following definition.

\begin{definition}
We say that $X$ is \emph{diagonal},
if one has $h^{p,q}(X)=0$ unless $p=q$.
\end{definition}

It is known  that if $\Dbcoh(X)$
contains a full exceptional collection, then~$X$ is diagonal, due to additivity of Hochschild homology as above.
Examples of diagonal varieties include even-dimensional quadrics and even-dimensional intersections of two quadrics.
For varieties of the latter two classes, there is indeed a full exceptional collection in
the derived category of coherent sheaves, see~\cite{Ka88} and~\mbox{\cite[Corollary~5.7]{Kuz08}}.
The following natural question is attributed to Bondal
and Orlov: is it true that for every diagonal Fano
variety~$X$ there is a full exceptional collection
in $\Dbcoh(X)$?
It appeared that the answer to this question is negative:
a counterexample was recently constructed by Kuznetsov~\cite{Kuz18b}.
However, there is Lunts' refinement of this question requiring, in addition, that $X$ is Tate,
see~\cite[Conjecture 1.2]{EL16} (note that the requirement for $X$ to be Fano is not necessary in this case).
An alternative refinement is a requirement for {the Grothendieck group} $K_0(X)$ of the variety $X$
to be a free group.
Besides that, there is another conjecture of Orlov
predicting that every variety with a full exceptional collection is rational.

There are further classes of varieties that may be considered relatively simple from
the point of view of derived categories of coherent sheaves.
One of the interesting situations is when $\Dbcoh(X)$ contains
an exceptional collection such that its semi-orthogonal complement in $\Dbcoh(X)$
is a category $\mathcal{A}_X$ with certain nice properties. For instance, $\mathcal{A}_X$ may itself
be equivalent to the derived category of coherent sheaves on a variety of small dimension, or on
a Calabi--Yau variety.
One can easily see that if $\mathcal{A}_X$ is equivalent to the derived category
of coherent sheaves on a curve,
then again by additivity of Hochschild homology and Hochschild--Kostant--Rosenberg theorem
for $X$ we obtain strong restrictions on the Hodge numbers of $X$.
Namely, under an additional assumption that $h^{p,q}(X)=0$ unless~\mbox{$p=q$} or $p+q=n$,
the variety $X$ is of curve type in the following sense.

\begin{definition}
We say that $X$ is \emph{of curve type},
if $n$ is odd,
and one has $h^{p,q}(X)=0$ unless $p=q$ or
$\{p,q\}=\{\frac{n-1}{2},\frac{n+1}{2}\}$.
\end{definition}

Similarly ,one can easily see that if $\mathcal{A}_X$ is equivalent to the derived category
of coherent sheaves on a a Calabi--Yau variety of dimension $m$,
then $X$ is of $m$-Calabi--Yau type in the following sense.

\begin{definition}
Given a positive integer $m$, we say that $X$ is \emph{of $m$-Calabi--Yau type},
if~$n$ has the same parity as $m$, one has  $h^{\frac{n-m}{2},\frac{n+m}{2}}(X)=1$, and
$h^{p,q}(X)=0$ for $p<\frac{n-m}{2}$ or~\mbox{$p=\frac{n-m}{2}$}, $p+q\neq n$.
We say that $X$ is \emph{of K3 type},
if it is of $2$-Calabi--Yau type.
\end{definition}

In other words, $X$ is of $m$-Calabi--Yau type if for the Hochschild homology of $\Dbcoh(X)$ one has $HH_s(\Dbcoh(X))=0$ for $s<-m$ and $s>m$, and
$$
\dim HH_{-m}(\Dbcoh(X))=\dim HH_{m}(\Dbcoh(X))=1.
$$

There is a notion of $m$-Calabi--Yau category defined in terms of the Serre functor, see, for example,~\mbox{\cite[Definition~1.1]{Kuz15b}}.
Derived categories of $m$-dimensional non-commutative Calabi--Yau varieties (see~\cite[\S 4.4]{Kuz15b}) are examples of categories of this type.
Note that if $m$-Calabi--Yau category $\mathcal T$ is a derived category of an $m$-dimensional variety,
one has $HH_s(\mathcal T)=0$ for $s<-m$ and $s>m$. However such vanishing is not expected to hold for an arbitrary
$m$-Calabi--Yau category.

\begin{question}
\label{question:CY type}
Does the vanishing hold in the geometric case, that is, when $\mathcal T=\mathcal A_X$
(cf.~Example~\ref{example:mCY cubics} below)?
\end{question}

Examples of varieties of curve type are given by Fano threefolds;
for a Fano threefold $X$ a category $\mathcal A_X$
is indeed of curve type (that is, its only possibly non-vanishing Hochschild
homology groups for $\mathcal A_X$ are of
degrees~$-1$,~$0$, and~$1$),
although for some Fano threefolds this category is
not {equivalent to} derived category of coherent sheaves on an actual curve.
An example of a variety of curve type whose derived category of coherent sheaves contains the derived category of
coherent sheaves on a curve as a semi-orthogonal complement to an exceptional collection
is given by an odd-dimensional intersection of two quadrics, see~\mbox{\cite[Corollary~5.7]{Kuz08}}.
A cubic fourfold in $\P^5$ is a variety of K3 type; its derived category of coherent sheaves
contains a derived category of a non-commutative K3 surface
as a semi-orthogonal complement to an exceptional collection by~\cite{Kuz10}.
Examples of varieties of $3$-Calabi--Yau type are given by a smooth five-dimensional quartic hypersurface in the weighted projective space~\mbox{$\P(1^6,2)$}, see~\S\ref{section:preliminaries} below for definitions and references
concerning weighted projective spaces and subvarieties therein;
a smooth five-dimensional complete intersection of a quadric and a cubic in~$\P^7$;
and a smooth seven-dimensional cubic in $\P^8$ (see~\mbox{\cite[\S3.1]{IM15}}).
The derived categories of coherent sheaves on these varieties contain
categories of $3$-Calabi--Yau type as semi-orthogonal complements to
exceptional collections, see~\mbox{\cite[Proposition~4.6]{IM15}}.

In this paper we classify
$\QQ$-homologically minimal varieties, diagonal varieties, varieties of curve type,
varieties of K3 type, and varieties of $3$-Calabi--Yau type
among smooth Fano
weighted complete intersections, see~\S\ref{section:preliminaries} for precise definitions.
The reason to consider this class of varieties is as follows.
One of the main ways to construct examples of Fano varieties is to describe them as complete intersections in already
known ones,
for instance, in Grassmannians, or in toric varieties whose properties are somewhat close to those of the usual projective space. 
Computing Hodge numbers for varieties of the former type is an interesting problem;
some partial results in this direction were obtained in~\cite{FM}.
On the other hand, one may argue that toric varieties whose properties are most close to those
of a projective space are weighted projective spaces. Fortunately, in this case we also have a nice and efficient
way to compute Hodge numbers of a complete intersection.

Let $X$ be a smooth complete intersection of dimension $n$ in a $\QQ$-factorial toric variety~$Y$.
From the Lefschetz-type theorem (see~\cite[Proposition~1.4]{Ma99})
it follows that~\mbox{$h^{p,q}(X)=h^{p,q}(Y)$} for $p+q\neq n$ and~\mbox{$h^{p,q}(X)\ge h^{p,q}(Y)$} for $p+q=n$. Moreover, if~$Y$ is a weighted projective space,
then~\mbox{$h^{p,p}(X)=1$} for $p\neq \frac{n}{2}$. This means that the only
``non-trivial'' Hodge numbers of $X$ are the middle ones $h^{p,n-p}(X)$.
Thus tho check that the variety $X$ is $\QQ$-homologically minimal, diagonal,
 of curve type, or of $m$-Calabi--Yau type,
it is enough to check the relevant conditions for the middle raw of the Hodge diamond.

In this paper we
provide lower bounds for some middle Hodge numbers of smooth Fano weighted complete intersections. For this we use a well known
method to compute Hodge numbers of weighted complete intersections as dimensions of graded
components of some bigraded ring, see Theorem~\ref{theorem:middle-Hodge-numbers} below.
As a corollary we describe all $\QQ$-homologically minimal and diagonal smooth Fano weighted complete intersections,
as well as ones of curve type and of $2$- and $3$-Calabi--Yau types.
To make a long story short, there are no new varieties of these five types except for examples mentioned in the above discussion.

Our first result is a complete classification of smooth Fano weighted complete intersections that are
$\QQ$-homologically minimal, diagonal, or of curve type. {We refer the reader to~\cite{Do82}
and~\cite{IF00}, or to~\S\ref{section:preliminaries} below, for relevant definitions.} We will exclude the case of the projective space (that can be thought of as a complete
intersection of codimension $0$ in itself) from our considerations to simplify notation.

\begin{theorem}\label{theorem:main}
Let $X$ be a smooth well formed Fano weighted complete intersection of dimension $n$ which
is not an intersection with a linear cone. The following assertions hold.
\begin{itemize}
\item[(i)] The variety $X$ is $\QQ$-homologically minimal if and only if $X$ is an odd-dimensional quadric in~$\P^{n+1}$.

\item[(ii)] The variety $X$ is diagonal
if and only if
either $n=2$ (cf. Table~\ref{table:dim 2}),
or $X$ is a quadric in $\P^{n+1}$, or $X$ is an even-dimensional intersection of
two quadrics in~$\P^{n+2}$.

\item[(iii)] The variety $X$ is of curve type if and only if either $n=1$, or $n= 3$ (cf. Table~\ref{table:dim 3}),
or $X$ is an odd-dimensional complete intersection of $k\le 3$ quadrics in~$\P^{n+k}$, or~$X$ is a five-dimensional cubic in~$\P^6$.
\end{itemize}
\end{theorem}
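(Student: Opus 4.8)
The plan is to reduce all three statements to the behavior of the middle Hodge numbers $h^{p,n-p}(X)$ and to control these through the bigraded ring of Theorem~\ref{theorem:middle-Hodge-numbers}. By the Lefschetz-type theorem quoted above, for a smooth weighted complete intersection the only possibly non-trivial Hodge numbers are the middle ones, while $h^{p,p}(X)=1$ for $p\ne\frac n2$. Hence each property becomes a vanishing condition on the primitive middle cohomology $H^n_{\mathrm{prim}}(X)$: the variety $X$ is $\QQ$-homologically minimal if and only if $H^n_{\mathrm{prim}}(X)=0$ entirely (which also pins the central number $h^{\frac n2,\frac n2}$ to its toric value $1$ when $n$ is even); it is diagonal if and only if $h^{p,n-p}(X)=0$ for every $p\ne n-p$, the central class being unconstrained; and it is of curve type if and only if $n$ is odd and $h^{p,n-p}(X)=0$ for all $p$ with $\{p,n-p\}\ne\{\frac{n-1}2,\frac{n+1}2\}$. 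In every case the task is to decide when the ``outer'' middle Hodge numbers, those of large Hodge level, vanish.

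First I would use Theorem~\ref{theorem:middle-Hodge-numbers} to express each $h^{p,n-p}(X)$ as the dimension of an explicit graded component of the associated bigraded ring, a quantity depending only on the weights $a_0,\dots,a_{n+k}$ and the degrees $d_1,\dots,d_k$. In the hypersurface case this component computing $h^{p,n-p}_{\mathrm{prim}}$ sits in degree $(n-p+1)d-\sum a_i$ of a finite Gorenstein ring, with the general weighted complete intersection treated analogously. From this I would extract lower bounds: apart from special numerical data the relevant component is non-empty and a monomial of the prescribed degree survives the defining ideal, forcing the corresponding Hodge number to be positive. Imposing the required vanishings then forces inequalities of the shape ``$\sum a_i$ exceeds a fixed multiple of $\sum d_j$'', the multiple growing with the Hodge level one must kill. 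Together with the standing hypotheses — $X$ well formed, Fano (so $\sum a_i>\sum d_j$), smooth, and not an intersection with a linear cone (so no $d_j$ coincides with a weight) — these inequalities force the degrees to be small and cut the admissible $(a_i;d_j)$ down to a short explicit list of types, uniform in $n$.

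The remaining step is to examine the surviving candidates. For each admissible tuple I would either compute the full set of middle Hodge numbers from the bigraded ring and confirm the claimed property, or identify $X$ with one of the known models: an odd-dimensional quadric for (i); a quadric or an even-dimensional intersection of two quadrics for (ii); and, for (iii), an odd-dimensional intersection of $k\le3$ quadrics or the five-dimensional cubic in $\P^6$. The very low-dimensional cases are separated out: for $n=1$ the curve-type condition is automatically satisfied, and for $n=3$ one has $h^{3,0}(X)=0$ by the Fano condition, so every such $X$ is of curve type, the list being the one recorded in Table~\ref{table:dim 3}.

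I expect the main obstacle to be the sharpness of the estimates near the centre of the Hodge diamond. Ruling out a Hodge number far from the diagonal is easy, but the borderline cases — where the relevant graded component of the bigraded ring is small and its non-vanishing is \emph{not} forced by crude monomial counts, since weighted Gorenstein rings can have genuine gaps in intermediate degrees — require a careful analysis of which monomials of the prescribed bidegree survive modulo the ideal, using the precise smoothness and well-formedness constraints on the weights. Controlling these boundary terms, and checking that no sporadic weighted complete intersection slips through with exactly the forbidden Hodge numbers vanishing, is where the real work lies.
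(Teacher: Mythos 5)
Your overall strategy coincides with the paper's: reduce everything to the primitive middle Hodge numbers via Theorem~\ref{theorem:middle-Hodge-numbers}, prove non-vanishing of the outermost one, and combine with numerical constraints to cut the data down to a short list. As written, however, the sketch has two genuine gaps. The first is that the numerical reduction does not close with the hypotheses you invoke. Your inequalities say that killing all Hodge numbers of level greater than $h$ forces $i_X=\sum a_i-\sum d_j$ to be at least roughly $d_k\cdot\frac{n-h}{2}$; but to conclude from this that $d_k$ is small you need an \emph{upper} bound on $i_X$ in terms of $n$, and the Fano condition only gives the lower bound $i_X>0$. The paper supplies exactly this ingredient: Theorem~\ref{theorem:low-coindex} shows $i_X\le n$, which itself requires first proving that a weighted complete intersection of positive codimension is never isomorphic to $\P^n$ (via Lemma~\ref{lemma:Okada} and a degree computation) before the Kobayashi--Ochiai bound can be applied. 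Without it, your inequality is compatible with arbitrarily large $d_k$ (one only knows $a_i<d_k$ from Lemma~\ref{lemma:a-vs-d}, so a priori $i_X$ could be as large as about $(N+1)(d_k-1)$), and the ``short explicit list, uniform in $n$'' is not obtained.

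The second gap is the non-vanishing itself. You correctly identify the positivity of $h^{p_X,n-p_X}$ with $p_X=\lceil i_X/d_k\rceil$ as the crux and observe that a crude monomial count does not suffice, but you give no mechanism to resolve this for an arbitrary member of the family. The paper's device is Lemma~\ref{lemma:exists-Fermat}: by deformation invariance of Hodge numbers one may replace $X$ by a complete intersection of Fermat type, for which the generators $\partial F/\partial x_i$ become explicit (congruent to $\sum_j d\alpha_{j,i}w_jx_i^{d_j-1}$ modulo the variables of weight greater than $1$); one then passes to the quotient by $x_{l+1},\dots,x_N,w_1,\dots,w_s,f_1,\dots,f_s$ and checks by a degree count in the weight-one variables that the monomials $w_{s+1}^{\delta_{s+1}}\cdots w_k^{\delta_k}P'$ of bidegree $(p_X,-i_X)$ cannot lie in the Jacobian-type ideal (Lemma~\ref{lemma:s1<k}), with a separate argument when $r=d-1$. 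This, combined with the easy vanishing $h_{pr}^{q,n-q}(X)=0$ for $q<p_X$ and with the explicit computations for intersections of quadrics and cubics in Lemma~\ref{lemma:int-of-quadrics} (needed, for instance, to see that four or more quadrics already fail to be of curve type), is the real content of the proof; your sketch flags it as ``where the real work lies'' but leaves it open.
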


Our second result concerns smooth Fano weighted complete intersections that are of $m$-Calabi--Yau type for some $m$.
For a weighted complete intersection $X$ of multidegree~\mbox{$(d_1,\ldots,d_k)$} in~\mbox{$\P(a_0,\ldots,a_N)$} we put
$$
i_X=\sum a_i-\sum d_j.
$$

\begin{theorem}\label{theorem:main-CY}
Let $X$ be a smooth well formed Fano weighted complete intersection
of multidegree $(d_1,\ldots,d_k)$, $d_1\le\ldots\le d_k$,
which is not an intersection with a linear cone.
Put $n=\dim X$, and let $m$ be a positive integer.
Then $X$ is of
$m$-Calabi--Yau type if and only if the following conditions hold:
\begin{itemize}
\item one has either $k=1$ or $d_{k-1}<d_k$;

\item the number
$i_X$ is divisible by $d_k$;

\item one has $m=n-\frac{2i_X}{d_k}$.
\end{itemize}
\end{theorem}

\begin{proposition}\label{proposition:main-CY}
Let $X$ be a smooth well formed Fano weighted complete intersection
which is not an intersection with a linear cone.
The following assertions hold.
\begin{itemize}
\item[(i)] The
variety $X$ is of K3 type
if and only if $X$ is
a four-dimensional cubic in $\P^5$.

\item[(ii)] The variety $X$ is of $3$-Calabi--Yau type if and only if $X$ is
either a five-dimensional quartic hypersurface in $\P(1^6,2)$,
or a five-dimensional complete intersection of a quadric and a cubic in $\P^7$,
or a seven-dimensional cubic in $\P^8$.
\end{itemize}
\end{proposition}

Using the method of the proof of Theorem~\ref{theorem:main-CY}, one can also classify smooth
Fano complete intersections of $m$-Calabi--Yau type for any given $m$.

The following example related to Theorem~\ref{theorem:main-CY} is well known to experts.

\begin{example}[cf. Question~\ref{question:CY type}]
\label{example:mCY cubics}
Let $X$ be a smooth cubic in $\P^{n+1}$. Then $X$ is of $m$-Calabi--Yau type if and only if~\mbox{$n=3m-2$}.
By~\cite[Corollary~4.2]{Kuz04} (or by~\cite[Corollary~4.3]{Kuz04}, see also~\mbox{\cite[Corollary~4.2]{Kuz15b}})
the derived category of coherent sheaves on a cubic of dimension $3m-2$ contains an $m$-Calabi--Yau
category as a semi-orthogonal complement to
an exceptional collection.
By~\cite[Corollary~4.2]{Kuz04} the same assertion holds for other smooth hypersurfaces in weighted projective spaces
satisfying the conditions of Theorem~\ref{theorem:main-CY}, e.g., for
$(2m-1)$-dimensional quartic hypersurfaces in~\mbox{$\P(1^{2m}, 2)$}.
It would be interesting to know if this is also the case for complete intersections of larger codimension that
satisfy the restrictions provided by Theorem~\ref{theorem:main-CY}, e.g., for $(3m-4)$-dimensional complete intersections of
a quadric and a cubic in~$\P^{3m-2}$.
\end{example}

\begin{remark}\label{remark:Iliev-Manivel}
Fano varieties of $3$-Calabi--Yau type were considered {(and were called just \emph{varieties of Calabi--Yau type})}
in~\cite{IM15} under a certain additional conditions.
Namely, the authors of~\cite{IM15} required that for
an $n$-dimensional {Fano} threefold $X$ {of $3$-Calabi--Yau type}
one has $h^{p,0}(X)=0$ for all $p$ and
for any generator $\omega \in H^{n+2,n-1}(X)$ the contraction map
\[\xymatrix{
H^1(X, TX) \ar[r]^\omega & H^n(X, \Omega^{n+1}_X)
}\]
is an isomorphism, see condition~(2) in~\cite[Definition~2.1]{IM15} (note that there is a misprint in the index of the target cohomology group
in \cite[Definition~2.1]{IM15}).
Proposition~\ref{proposition:main-CY}(ii)
shows that the examples found in~\cite[\S3.1]{IM15}
give all possible varieties of this type among smooth Fano weighted complete intersections.
\end{remark}

Keeping in mind the known results
on the derived categories of coherent sheaves on varieties and exceptional collections
therein (see~\cite{KO95}~\cite{Ka88},~\cite{Kuz08}, \mbox{\cite[Corollary~4.2]{Kuz15b}}),
we conclude that Theorem~\ref{theorem:main} and
Proposition~\ref{proposition:main-CY}
imply the following assertion.

\begin{corollary}
\label{corollary:CY categories}
Let $X$ be a smooth well formed Fano weighted complete intersection of dimension $n$ which
is not an intersection with a linear cone. Then the following assertions hold.
\begin{itemize}
\item[(i)] There is a full exceptional collection in the derived category $\Dbcoh(X)$
if and only if {either~\mbox{$n=2$}, or~$X$ is a quadric in $\P^{n+1}$, or $X$ is} an even-dimensional intersection of
two quadrics in~$\P^{n+2}$.

\item[(ii)] The derived category $\Dbcoh(X)$ is of curve type
if and only if $X$ is either  a threefold, or an odd-dimensional complete intersection of $k\le 3$ quadrics in~$\P^{n+k}$, or a five-dimensional cubic in~$\P^6$.

\item[(iii)] The derived category $\Dbcoh(X)$ contains a derived category of a non-commutative K3 surface
as a semi-orthogonal complement to an exceptional collection
if and only if $X$ is
a four-dimensional cubic in $\P^5$.
\end{itemize}

\end{corollary}

\begin{remark}
If $X$ is
either a five-dimensional quartic hypersurface in~\mbox{$\P(1^6,2)$},
or a five-dimensional complete intersection of a quadric and a cubic in~$\P^7$,
or a seven-dimensional cubic in $\P^8$, then the derived category $\Dbcoh(X)$ contains a subcategory of $3$-Calabi--Yau type
as a semi-orthogonal complement to an exceptional collection, see~\mbox{\cite[Proposition~4.6(3)]{IM15}}.
It would be intersting to find out if the converse is also true.
\end{remark}

The notions we have introduced above are related to the following classical notion.

\begin{definition}[{cf. \cite[\S1]{Rapoport}, \cite[\S2a]{Carlson}}]
\label{definition: Hodge level}
Let $X$ be a smooth projective variety of dimension $n$.
Put
$$
\hh(X)=\max\{q-p\mid h^{p,q}(X)\neq 0\}.
$$
The number $\hh(X)$ will be called \emph{the Hodge level} of~$X$.
\end{definition}

Note that $\hh(X)$ is always non-negative.
In the terminology of \cite{Rapoport}, the number $\hh(X)$ is the maximal Hodge level
of the Hodge structures on the cohomology groups $H^r(X,\mathbb{Z})$ for $0\le r\le 2\dim X$.
If $X$ is an $n$-dimensional smooth weighted complete intersection, then $\hh(X)$ equals
the Hodge level of the Hodge structure on $H^n(X,\mathbb{Z})$.
It is obvious that~\mbox{$\hh(X)=0$} if and
only if $X$ is diagonal; in particular, this holds for any smooth quadric.
One has $\hh(X)\le 1$ if $X$ is of curve type.
If $X$ is of $m$-Calabi--Yau type (for instance, if $X$ is a Calabi--Yau variety of dimension~$m$),
then~\mbox{$\hh(X)=m$}. Smooth complete intersections~$X$ in a usual projective space
such that~\mbox{$\hh(X)\le 1$} were classified in~\mbox{\cite[\S2]{Rapoport}}.

Our next result describes
some general properties of Hodge level for weighted complete intersections.
For a weighted complete intersection $X$ of multidegree~\mbox{$(d_1,\ldots,d_k)$} in~\mbox{$\P(a_0,\ldots,a_N)$},
where~\mbox{$d_1\le\ldots\le d_k$}, we put
$$
p_X=\left\lceil\frac{i_X}{d_k}\right\rceil.
$$

\begin{proposition}\label{proposition:Hodge-level}
Let $X$ be a smooth well formed Fano weighted complete intersection of dimension $n$ which
is not an intersection with a linear cone.
If $X$ is an odd-dimensional quadric, then $\hh(X)=0$.
Otherwise~\mbox{$\hh(X)=n-2p_X$}.
\end{proposition}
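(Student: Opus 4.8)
The plan is to reduce the computation of $\hh(X)$ to locating the extreme nonzero middle Hodge number of $X$, and then to read that extreme value off the graded ring of Theorem~\ref{theorem:middle-Hodge-numbers}. First I would combine the Lefschetz-type theorem quoted in the introduction with Hodge symmetry. For $p\neq n/2$ the ambient contribution $h^{p,n-p}(Y)$ vanishes, since a weighted projective space is Hodge-diagonal; hence every off-diagonal middle Hodge number of $X$ coincides with its primitive part, while $h^{p,p}(X)=1$ for $p\neq n/2$, so that $n/2\in P$ whenever $n$ is even. Writing $P=\{p\mid h^{p,n-p}(X)\neq 0\}$, the equality $h^{p,n-p}(X)=h^{n-p,p}(X)$ shows that $P$ is symmetric about $n/2$, whence $\hh(X)=\max P-\min P=n-2\min P$. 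Thus it suffices to prove $\min P=p_X$, equivalently $\max P=n-p_X$. At this point I would note that excluding the odd-dimensional quadric is exactly what guarantees $P\neq\emptyset$: it is the only configuration in which all middle Hodge numbers vanish, so that $\hh(X)=-\infty$ while the formula $n-2p_X$ would return a finite (negative) value.

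Next I would feed this into Theorem~\ref{theorem:middle-Hodge-numbers}, which expresses each primitive middle Hodge number as the dimension of a graded component of the associated (bi)graded Jacobian-type ring. In the residue description underlying that theorem, the piece $h^{p,n-p}(X)$ is governed by forms whose excess pole orders $(c_1-1,\ldots,c_k-1)$ along $f_1,\ldots,f_k$ satisfy $\sum_j(c_j-1)=n-p$, and whose numerator has degree $\sum_j c_j d_j-\sum_i a_i=\sum_j(c_j-1)d_j-i_X$; a nonzero class forces this degree to be non-negative. Consequently $\max P$, the most holomorphic nonzero index, is obtained by minimizing $\sum_j(c_j-1)$ subject to $\sum_j(c_j-1)d_j\ge i_X$ with integers $c_j\ge 1$. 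Since $d_k$ is the largest degree, this linear program is solved by loading the entire excess onto the top equation, with minimal value $\lceil i_X/d_k\rceil=p_X$. This immediately yields the upper bound $\max P\le n-p_X$: for $p>n-p_X$ one has $\sum_j(c_j-1)=n-p\le p_X-1$, so the numerator degree is at most $(p_X-1)d_k-i_X<0$ for every admissible pole distribution, and the relevant component vanishes. This half of the argument is essentially formal.

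The remaining, and harder, half is the matching lower bound $\max P\ge n-p_X$, that is, the nonvanishing of the extreme Hodge number $h^{n-p_X,p_X}(X)$. Here I would single out the distinguished solution $c_k-1=p_X$ and $c_j=1$ for $j<k$, whose numerator degree $p_X d_k-i_X$ lies in $[0,d_k)$, and show that the corresponding component of the ring of Theorem~\ref{theorem:middle-Hodge-numbers} is nonzero. In this low-degree range the relations of the Jacobian ideal do not yet intervene, so the component reduces to the span of monomials of degree $p_X d_k-i_X$ in the weights $a_0,\ldots,a_N$ (times the chosen monomial in the pole variables), and the task becomes to verify that such a monomial survives in the quotient. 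I expect this to be the main obstacle: one must rule out gaps in the numerical semigroup of the weights so that the degree $p_X d_k-i_X$ is genuinely represented, and one must exclude precisely the degenerate configuration in which the predicted extreme piece collapses, namely the odd-dimensional quadric, where $p_X>n/2$ and the symmetric interval $[\,p_X,\,n-p_X\,]$ is empty. I would isolate this using well-formedness, smoothness, and the hypothesis that $X$ is not an intersection with a linear cone, which together constrain the admissible weights and degrees and ensure that the extreme class is genuinely present once the quadric is set aside; this simultaneously forces $p_X\le n/2$ for all the remaining cases, so that $n-2p_X\ge 0$ as it must be.
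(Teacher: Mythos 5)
Your overall architecture matches the paper's: by the Lefschetz-type theorem and Hodge symmetry the problem reduces to showing that the minimal $q$ with $h^{q,n-q}(X)\neq 0$ equals $p_X$; the vanishing of $h^{q,n-q}_{pr}(X)$ for $q<p_X$ is exactly the paper's Lemma~\ref{lemma:vanishing}, and your bidegree/linear-programming argument for it is correct; so the real content is the nonvanishing of $h^{p_X,n-p_X}_{pr}(X)$. The gap is in that last step. You assert that for numerator degree $r=p_Xd_k-i_X\in[0,d_k)$ ``the relations of the Jacobian ideal do not yet intervene,'' so that the relevant graded component is simply the span of the candidate monomials. This fails precisely at the boundary case $r=d_k-1$: after reducing modulo the variables of weight greater than $1$ and the variables $w_j$ attached to equations of non-maximal degree, the generators $\partial F/\partial x_i$ of the Jacobian ideal have degree exactly $d_k-1$ in the remaining $x$'s, so they land in the critical degree and do kill part --- and sometimes all --- of your candidate space. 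The odd-dimensional quadric is the extreme instance ($d_k=2$, $r=1$), and it is \emph{not} fully explained by your remark that $p_X>n/2$ there; the same phenomenon threatens intersections of several quadrics with $i_X$ odd, where one must exhibit specific monomials that survive modulo the ideal.

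The paper closes this gap by degenerating to a Fermat-type member of the family (Lemma~\ref{lemma:exists-Fermat}), then in Lemma~\ref{lemma:s1<k} isolating the subspace of ``$(d-1)$-Fermat type'' elements that can actually be reached by the ideal and checking that its complement still has positive dimension (using $l\ge k$ and $2k\le N$ from Theorem~\ref{theorem:Fano-invariants}), and by treating complete intersections of quadrics and cubics separately in Lemma~\ref{lemma:int-of-quadrics}, where explicit surviving monomials such as $w_1^{\beta_1}\cdots w_{k_1}^{\beta_{k_1}}x_i$ with $\beta_i=0$ are produced. Your proposed resolution --- ruling out gaps in the numerical semigroup of the weights so that the degree $r$ is represented by monomials --- addresses a non-issue (there are at least $k+1\ge 2$ weights equal to $1$, so every non-negative degree is represented) and misses the actual difficulty, which is injectivity into the quotient by the Jacobian ideal rather than mere existence of monomials.
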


Note that if $X$ is a Fano variety of dimension $n$,
then~\mbox{$h^{0,n}(X)=0$} by Kodaira vanishing, so that~\mbox{$\hh(X)\le n-2$}.
Proposition~\ref{proposition:Hodge-level} implies the following.

\begin{corollary}\label{corollary:Hodge-level}
Let $X$ be a smooth well formed Fano weighted complete intersection of dimension $n\ge 2$ which
is not an intersection with a linear cone.
The following assertions hold.
\begin{itemize}
\item[(i)]
Suppose that either $i_X\le 2$,
or $i_X\le 3$ and $X$ is not a complete intersection of quadrics in a projective space,
or $i_X\le 4$ and $X$ is not a complete intersection of quadrics and cubics in a projective space.
Then~\mbox{$\hh(X)=n-2$}.

\item[(ii)] Suppose that $X$ is not a complete intersection of
quadrics in a projective space. Then~\mbox{$\hh(X)\ge\frac{n-4}{3}$}.
\end{itemize}
\end{corollary}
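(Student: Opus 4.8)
The plan is to deduce both parts from Proposition~\ref{proposition:Hodge-level}, which already identifies $\hh(X)=n-2p_X$ with $p_X=\lceil i_X/d_k\rceil$; everything then reduces to estimating $p_X$, that is, the ratio $i_X/d_k$. I will use two standing facts. Since $X$ is not an intersection with a linear cone, every degree satisfies $d_j\ge 2$, so $d_k\ge 2$; and since $X$ is Fano, $i_X\ge 1$, so $p_X\ge 1$. The second fact, which is the substantive geometric input and which I would draw from the preliminary analysis of weighted complete intersections, is that a smooth well formed weighted complete intersection which is not an intersection with a linear cone and all of whose degrees satisfy $d_j\le 3$ must lie in an ordinary projective space, i.e.\ all of its weights equal $1$. (Heuristically, a weight $a_i\ge 2$ produces a singular point of $\P(a_0,\ldots,a_N)$ that $X$ can avoid while remaining smooth only at the cost of a monomial forcing some $d_j\ge 2a_i\ge 4$, the case $d_j=a_i$ being excluded as a linear cone.) Thus the hypotheses that $X$ is not a complete intersection of quadrics, respectively not an intersection of quadrics and cubics, in a projective space translate into ``not all $d_j$ equal $2$'', respectively ``not all $d_j$ are at most $3$''. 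Finally, odd-dimensional quadrics are the only varieties to which Proposition~\ref{proposition:Hodge-level} does not apply; being complete intersections of quadrics, they are excluded in every case below except the first alternative of~(i), where such a quadric would satisfy $i_X=n\le 2$ and hence be the conic $(n=1)$. As $n-2<0$ is not an admissible value of the Hodge level, I would treat $n\ge 2$ throughout~(i), so that in all cases $X$ is not an odd-dimensional quadric and the proposition applies.

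For~(i) it is enough to show $p_X=1$, equivalently $i_X\le d_k$, whereupon $\hh(X)=n-2$. If $i_X\le 2$ this is immediate from $d_k\ge 2$. If $i_X\le 3$ and not all $d_j$ equal $2$, then $d_k\ge 3\ge i_X$. If $i_X\le 4$ and not all $d_j$ are at most $3$, then $d_k\ge 4\ge i_X$. Since $1\le i_X\le d_k$ forces $\lceil i_X/d_k\rceil=1$, in each case $p_X=1$.

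For~(ii) I start from $\hh(X)=n-2p_X$, which is $\ge 0$ because a finite Hodge level is non-negative (here $X$ is not an odd-dimensional quadric, so Proposition~\ref{proposition:Hodge-level} gives a finite value). If $n\le 4$ then $\tfrac{n-4}{3}\le 0\le\hh(X)$ and the inequality holds trivially. If $n\ge 5$ then $\dim X\ge 3$, and since $X$ is a smooth Fano variety not isomorphic to $\P^n$, the Kobayashi--Ochiai bound on the Fano index (see~\cite{KobayashiOchiai}) gives $i_X\le n$; moreover $d_k\ge 3$ because $X$ is not a complete intersection of quadrics in a projective space. Hence $p_X=\lceil i_X/d_k\rceil\le\lceil n/3\rceil\le\tfrac{n+2}{3}$, so $\hh(X)=n-2p_X\ge n-\tfrac{2(n+2)}{3}=\tfrac{n-4}{3}$. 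The boundedness statement is then immediate: if $\hh(Y)<h$ then $\tfrac{n-4}{3}\le\hh(Y)<h$, whence $n<3h+4$.

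The only delicate points are the standing geometric fact that degrees at most $3$ force an ordinary projective space---this is where smoothness, well formedness, and the non-linear-cone hypothesis are genuinely used, and I expect its proof to be the main obstacle to a self-contained argument---together with the careful bookkeeping of odd-dimensional quadrics, in particular the degenerate conic $n=1$ in~(i). Everything else is ceiling arithmetic.
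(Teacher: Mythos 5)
Your argument is correct and is essentially the paper's own proof: the ``standing geometric fact'' you invoke is exactly Lemma~\ref{lemma:only-quadrics} (proved there via Lemma~\ref{lemma:a-vs-d}), the index bound $i_X\le n$ is Theorem~\ref{theorem:low-coindex}(ii), and the ceiling arithmetic for~(ii) is identical, down to the estimate $\left\lceil\frac{n}{3}\right\rceil\le\frac{n+2}{3}$. Your observation about the conic in the first alternative of~(i) (where $\hh(X)=-\infty\neq n-2$) flags a genuine edge case that the paper's one-line deduction of~(i) does not address.
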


An immediate consequence of Corollary~\ref{corollary:Hodge-level}(ii)
is the following.

\begin{corollary}
For every number $h$ the dimension of
smooth well formed Fano weighted complete intersections~$Y$ such that~$Y$ is
not an intersection with a linear cone, not a complete intersections of
quadrics in a projective space, and~\mbox{$\hh(Y)<h$}, is bounded.
\end{corollary}

\begin{remark}
\label{remark:quasismooth}
Our results are motivated by applications to the derived categories of coherent sheaves on smooth varieties.
However, if we allow singularities,
then the cohomological dimension of the derived category of coherent sheaves on a variety becomes infinite, and
it's hard to study its semiorthogonal components of K3 type. On the other hand, if a weighted complete intersection $X$
is quasi-smooth (see Definition~\ref{definition: quasi-smoothness}), then one can consider
the derived category of a smooth stack $\mathcal X$ with support at $X$.
Another approach is to consider  the subcategory $Perf(X)\subset \Dbcoh(X)$ of perfect complexes.
Anyway, cohomology groups of quasi-smooth complete intersections admit pure Hodge structures (see~\mbox{\cite[\S11]{BC94}}), so one can ask
which of them are of curve, K3, or $m$-Calabi--Yau type. Moreover, the approach to check this
used in our paper can be applied to quasi-smooth weighted complete intersections.

However it turns out that the quasi-smoothness restriction
is too weak to get a reasonable classification. It is easy to see that any (well formed) weighted projective space
is a quasi-smooth $\QQ$-homologically minimal variety (which can be considered as a weighted complete intersection
of codimension $0$ in itself). Furthermore, calculations of V.\,Alexeev show that
there are $124$ families of quasi-smooth well formed Fano hypersurfaces of K3 type in five-dimensional weighted projective spaces with weights up to~$50$;
there are $122$ families of quasi-smooth well formed Fano hypersurfaces of K3 type in seven-dimensional weighted projective spaces with weights up to $30$;
there are $105$ families of quasi-smooth well formed Fano hypersurfaces of K3 type in nine-dimensional weighted projective spaces with weights up to $20$, etc.

A nice observation due to A.\,Kuznetsov is that most of hypersurfaces from Alexeev's list have birational
transformations to varieties related to K3 surfaces,
so categories of K3 surfaces
naturally appear in their derived categories.
\end{remark}

\medskip

The paper is organized as follows.
In~\S\ref{section:preliminaries} we give the relevant definitions, recall the method to compute Hodge numbers of weighted complete intersections
and prove some auxiliary results.
In~\S\ref{section:bounds} we obtain the bounds for Hodge numbers of Fano weighted complete intersections.
In~\S\ref{section:proofs} we prove the main results of the paper, namely,  Theorems~\ref{theorem:main}
and~\ref{theorem:main-CY}, Propositions~\ref{proposition:main-CY} and~\ref{proposition:Hodge-level},
and Corollary~\ref{corollary:Hodge-level}.
In~\S\ref{section:quasi-smooth} we briefly discuss the quasi-smooth case. In~\S\ref{section:discussion} we discuss some open questions.
In the appendix we provide the well known lists of two- and three-dimensional
smooth well formed Fano weighted complete intersections.

\medskip

The authors are grateful to V.\,Alexeev, A.\,Fonarev, A.\,Iliev, A.\,Kuznetsov, L.\,Manivel, D.\,Orlov, Yu.\,Prokhorov,
and Z.\,Tur\v{c}inovi\'c
for useful discussions. Special thanks go to the referee whose numerous comments helped
to forge the present form of our paper.

\section{Preliminaries}
\label{section:preliminaries}

We recall here some basic properties of weighted complete intersections. We refer the reader
to~\cite{Do82} and~\cite{IF00} for more details. Put
$$
\P=\P(a_0,\ldots,a_N)=\mathrm{Proj}\, \CC[x_0,\ldots,x_N],
$$
where the weight of $x_i$ equals $a_i$.
Without loss of generality we assume that $a_0\le\ldots\le a_N$.
We will use the abbreviation
\begin{equation*}
(a_0^{r_0},\ldots,a_m^{r_m})=
(\underbrace{a_0,\ldots,a_0}_{r_0\ \text{times}},\ldots,\underbrace{a_m,\ldots,a_m}_{r_m\ \text{times}}),
\end{equation*}
where $r_0,\ldots,r_m$ will be allowed to be
any positive integers. If some of $r_i$ is equal to $1$ we drop it for simplicity.

The weighted projective space $\P$ is said to be \emph{well formed} if the greatest common divisor of any $N-1$ of the weights~$a_i$ is~$1$. Every weighted projective space is isomorphic to a well formed one, see~\cite[1.3.1]{Do82}.
{If $\P$ is well formed, then the singular locus of $\P$ is a union of strata
$$
\left\{(x_0:\ldots:x_N) \mid x_i=0 \text{\ for all\ } i\notin J\right\}
$$
for all subsets $J\subset \{0,\ldots,n\}$ such that the greatest common divisor of the weights~$a_i$
for~\mbox{$i\in J$} is greater than~$1$, see~\cite[5.15]{IF00}.}
A subvariety $X\subset \P$ is said to be \emph{well formed}
if~$\P$ is well formed and
$$
\mathrm{codim}_X \left( X\cap\mathrm{Sing}\,\P \right)\ge 2.
$$

We say that a subvariety $X\subset\P$ of codimension $k\ge 1$ is a \emph{weighted complete
intersection of multidegree $(d_1,\ldots,d_k)$} if its weighted homogeneous ideal in $\CC[x_0,\ldots,x_N]$
is generated by a regular sequence of $k$ homogeneous elements of degrees $d_1,\ldots,d_k$.
The above condition is equivalent to the requirement that the codimension of (every irreducible component of) the variety $X$
equals~$k$. This follows from
a similar equivalence for the variety $C_X\subset\mathbb{A}^{N+1}$ defined as the closure of the preimage of
$X$ under the natural projection $\mathbb A^{N+1}\setminus \{0\}\to \P$ and considered as an intersection of
$k$ hypersurafces in $\mathbb{A}^{N+1}$. The latter equivalence  can be deduced, for instance,
from~\mbox{\cite[Theorem II.8.21A(c)]{Ha77}} or from \cite[Exersise II.8.4]{Ha77}.

Note that $\P$ can be thought of as a complete
intersection of codimension $0$ in itself (which gives us a nice smooth Fano variety if~\mbox{$\P\cong\P^N$}),
but we do not consider this case for simplicity.
The weighted complete intersection~$X$ is said to be \emph{an intersection
with a linear cone} if one has $d_j=a_i$ for some~$i$ and~$j$.
In this case one can exclude the $i$-th weighted homogeneous coordinate and think about
$X$ as a weighted complete intersection in a weighted projective space of lower dimension, provided that
$X$ is general enough, cf.~\mbox{\cite[Remark~5.2]{PrzyalkowskiShramov-Weighted}}.

We will be interested in {smooth} well formed weighted
complete intersections. {Note that by~\cite[Corollary~2.14]{PrzyalkowskiShramov-Weighted} such} varieties are automatically quasi-smooth
{(cf. Definition~\ref{definition: quasi-smoothness}). This allows us to use many auxiliary results that
were proved for quasi-smooth weighted complete intersections.}

For a smooth well formed weighted complete intersection, one has the following relation
between the weights $a_i$ and the degrees $d_j$.

\begin{lemma}[{see~\cite[Lemma~2.15]{PrzyalkowskiShramov-Weighted}, cf. \cite[6.12]{IF00}, \cite[Proposition~3.1]{PST17}}]
\label{lemma:a-vs-d}
Let~\mbox{$X\subset\P$} be a smooth well formed weighted complete intersection
of multidegree~\mbox{$(d_1,\ldots,d_k)$}.
Then for every~$1\le t\le k$ and every choice of~$t$ weights~\mbox{$a_{i_1},\ldots,a_{i_t}$, $i_1<\ldots<i_t$},
such that their greatest common divisor~$\delta$ is greater than~$1$
there exist~$t$ degrees~\mbox{$d_{s_1},\ldots,d_{s_t}$, $s_1<\ldots<s_t$},
such that their greatest common divisor is divisible by~$\delta$.
\end{lemma}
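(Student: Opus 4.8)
The plan is to reduce everything to a single geometric fact --- that a smooth well formed weighted complete intersection cannot meet the singular locus of the ambient weighted projective space --- and then to read off the divisibility of the degrees from the way $X$ meets the coordinate strata. First I would reformulate the conclusion. For an integer $\delta$ write $A_\delta=\{i:\delta\mid a_i\}$ and $D_\delta=\{j:\delta\mid d_j\}$. Since the greatest common divisor of a collection of integers is divisible by $\delta$ exactly when each of them is, the assertion ``there exist $t$ degrees whose greatest common divisor is divisible by $\delta$'' is equivalent to $|D_\delta|\ge t$. As the chosen weights all lie in $A_\delta$, we have $t\le|A_\delta|$, so it suffices to prove $|D_\delta|\ge|A_\delta|$.

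The heart of the matter, and the step I expect to be the main obstacle, is a local analysis at a singular point. For an index set $S$ write $\Pi_S=\{x_j=0:j\notin S\}\cong\P(a_i:i\in S)$, and recall that a point $q\in\P$ with support $I=\{i:x_i(q)\neq0\}$ is singular precisely when $e=\gcd(a_i:i\in I)>1$, in which case its stabilizer in the defining $\CC^{*}$-action is $\mu_e$. I would show that a smooth --- hence quasi-smooth, by~\cite[Corollary~2.14]{PrzyalkowskiShramov-Weighted} --- well formed $X$ cannot meet $\mathrm{Sing}\,\P$ at all. Suppose $q\in X\cap\mathrm{Sing}\,\P$ has stabilizer $\mu_e$, and lift it to a nonzero point $\tilde q$ of the affine cone $C_X=\{f_1=\dots=f_k=0\}$, which is smooth at $\tilde q$ by quasi-smoothness. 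Then $df_1(\tilde q),\dots,df_k(\tilde q)$ are linearly independent, so the $\mu_e$-equivariant map $df(\tilde q)\colon\CC^{N+1}\to\CC^{k}$ is surjective; here $\mu_e$ acts on the $j$-th source coordinate with weight $a_j\bmod e$ and on the $\ell$-th target coordinate with weight $d_\ell\bmod e$. Counting nontrivial weights in the resulting short exact sequence of $\mu_e$-representations and discarding the (trivial) Euler/orbit direction, the number of nontrivial $\mu_e$-weights on a slice of $C_X$ transverse to the orbit through $\tilde q$ equals
\[
\big(N+1-|A_e|\big)-\big(k-|D_e|\big)=n+1-|A_e|+|D_e|.
\]
Since $X$ is locally the quotient of this smooth slice by $\mu_e$, and a cyclic quotient with two or more nontrivial weights is singular, smoothness of $X$ at $q$ forces this count to be at most $1$, whence $|D_e|\le|A_e|-n$.

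Next I would turn this into a contradiction with well-formedness. The stratum $\Pi_{A_e}$ lies in $\mathrm{Sing}\,\P$, and inside $X$ it is cut out by $\{f_\ell|_{\Pi_{A_e}}:\ell\in D_e\}$, because $f_\ell|_{\Pi_{A_e}}\equiv0$ whenever $e\nmid d_\ell$ (every monomial supported on $A_e$ has $e$-divisible degree). As $\Pi_{A_e}$ is projective of dimension $|A_e|-1$ and $|D_e|\le|A_e|-n\le|A_e|-1$, the intersection $X\cap\Pi_{A_e}$ is nonempty of dimension at least $(|A_e|-1)-|D_e|\ge n-1$. But $X\cap\Pi_{A_e}\subseteq X\cap\mathrm{Sing}\,\P$, so well-formedness yields $\dim(X\cap\Pi_{A_e})\le n-2$, a contradiction. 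Hence $X\cap\mathrm{Sing}\,\P=\varnothing$.

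Finally I would apply this to the $\delta$ of the statement. The stratum $\Pi_{A_\delta}$ has generic stabilizer of order $\gcd(a_i:i\in A_\delta)\ge\delta>1$, so $\Pi_{A_\delta}\subseteq\mathrm{Sing}\,\P$ and therefore $X\cap\Pi_{A_\delta}=\varnothing$. Since $X\cap\Pi_{A_\delta}$ is cut out in the projective variety $\Pi_{A_\delta}$ of dimension $|A_\delta|-1$ by the $|D_\delta|$ equations $f_\ell|_{\Pi_{A_\delta}}$, emptiness forces $|D_\delta|\ge|A_\delta|\ge t$; choosing any $t$ of these degrees $d_{s_1},\dots,d_{s_t}$ (each divisible by $\delta$) gives a tuple whose greatest common divisor is divisible by $\delta$, as required. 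The delicate point throughout is the weight bookkeeping on the transverse slice combined with the cyclic-quotient-singularity criterion; the remaining ingredients are elementary dimension counts.
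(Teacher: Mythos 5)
First, a remark on the comparison: the paper does not prove this lemma at all --- it is imported verbatim from \cite[Lemma~2.15]{PrzyalkowskiShramov-Weighted} --- so there is no internal proof to set your argument against. Your overall strategy is the standard one and is essentially the proof given in that reference: reduce everything to the fact that a smooth well formed weighted complete intersection is disjoint from $\mathrm{Sing}\,\P$, observe that $f_\ell$ restricts to zero on the stratum $\Pi_{A_\delta}$ unless $\delta\mid d_\ell$, and conclude from the projective dimension theorem that emptiness of $X\cap\Pi_{A_\delta}$ forces $|D_\delta|\ge|A_\delta|\ge t$. That reformulation and the final counting paragraph are correct.

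The gap is in your proof of the disjointness itself. The assertion that ``a cyclic quotient with two or more nontrivial weights is singular'' is false: by Chevalley--Shephard--Todd a quotient $\CC^m/G$ is smooth exactly when $G$ is generated by pseudo-reflections, and a cyclic group acting diagonally with several nontrivial weights can be so generated --- for instance $\mu_6$ acting on $\CC^2$ with weights $(2,3)$ has invariant ring $\CC[x^3,y^2]$ and smooth quotient. So smoothness of $X$ at $q$ does not by itself bound the number of nontrivial weights on the slice by $1$, and hence does not yield $|D_e|\le|A_e|-n$. The repair needs the well-formedness of $X$ already at this step, not only in the later dimension count: a nontrivial pseudo-reflection $g^{j}$ in the image of $\mu_e$ would fix a hyperplane of the slice, whose image is a divisor of $X$ through $q$ consisting of points with nontrivial stabilizer, i.e.\ a codimension-one component of $X\cap\mathrm{Sing}\,\P$, contradicting $\mathrm{codim}_X\left(X\cap\mathrm{Sing}\,\P\right)\ge 2$; only after excluding pseudo-reflections this way does Chevalley--Shephard--Todd force the slice action to be trivial and your contradiction to go through. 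Alternatively, and more economically, you could simply quote the disjointness of a smooth well formed $X$ from $\mathrm{Sing}\,\P$ as \cite[Proposition~8]{Di86}, exactly as the paper does in the proof of Lemma~\ref{lemma:exists-Fermat}, and keep only your last paragraph.
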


Consider the sheaf~\mbox{$\O_{\P}(1)$},
see~\mbox{\cite[1.4.1]{Do82}}. Recall that~\mbox{$\O_{\P}(1)$}
is usually not invertible. However, if $\P$ is well formed,
the restriction of~\mbox{$\O_{\P}(1)$} to $\P\setminus \mathrm{Sing}\,\P$
is an invertible sheaf, see~\mbox{\cite[1.5.5]{Do82}}.
Let $X$ be a weighted complete intersection in~$\P$.
Denote by $\O_X(1)$ the restriction of~\mbox{$\O_{\P}(1)$} to~$X$.
If $X$ is smooth and well formed,
then $X$ is contained in the smooth locus of~$\P$, see for instance~\mbox{\cite[Proposition~2.11]{PrzyalkowskiShramov-Weighted}}. Hence
the sheaf $\O_X(1)$ is a line bundle on $X$ in this case.

\begin{lemma}[{\cite[Remark~4.2]{Okada2}, \cite[Proposition~2.3]{PST17}}]
\label{lemma:Okada}
Let $X$ be a smooth well formed weighted complete intersection of dimension at least three in~$\P$.
Then the class of the line bundle $\mathcal{O}_{\P}(1)\vert_X$ is not divisible in~$\Pic(X)$.
\end{lemma}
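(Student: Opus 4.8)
The plan is to show that if $L = \O_{\P}(1)\vert_X$ were divisible, say $L \cong M^{\otimes r}$ for some line bundle $M$ on $X$ and some integer $r \ge 2$, we would contradict either the Lefschetz-type description of $\Pic(X)$ or the intersection-theoretic properties of the generator $\O_{\P}(1)$. The key input is that for a smooth well formed weighted complete intersection of dimension at least three, the Lefschetz theorem (as in the excerpt, via \cite[Proposition~1.4]{Ma99}) forces $\Pic(X) \cong \Pic(\P) \cong \mathbb{Z}$, with the positive generator being pulled back from the ambient weighted projective space. So the real content is to identify \emph{which} multiple of the abstract generator of $\Pic(X) \cong \mathbb{Z}$ the class of $\O_{\P}(1)\vert_X$ represents, and to show it is exactly $\pm 1$.

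First I would recall that the divisor class group $\mathrm{Cl}(\P)$ of the well formed weighted projective space is cyclic, generated by the class of $\O_{\P}(1)$; well-formedness is precisely what guarantees that this generator is not a proper multiple of another class in $\mathrm{Cl}(\P)$. Since $X$ is smooth and well formed, it lies in the smooth locus of $\P$ (by \cite[Proposition~2.11]{PrzyalkowskiShramov-Weighted}), so restriction gives a genuine line bundle $\O_X(1)$ and a homomorphism $\mathrm{Cl}(\P) \to \Pic(X)$. Next I would invoke the Lefschetz theorem in the form that, for $\dim X \ge 3$, this restriction map is an isomorphism onto $\Pic(X) \cong \mathbb{Z}$. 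Under this isomorphism the generator $\O_{\P}(1)$ of $\mathrm{Cl}(\P)$ maps to a generator of $\Pic(X)$, and hence $\O_X(1)$ itself generates $\Pic(X)$ and is therefore not divisible by any $r \ge 2$.

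The step I expect to require the most care is establishing that restriction $\mathrm{Cl}(\P) \to \Pic(X)$ is an \emph{isomorphism} rather than merely injective with some finite-index image: a priori, the abstract generator of $\Pic(X) \cong \mathbb{Z}$ could be a proper root of $\O_X(1)$ in the Picard group even if $\O_{\P}(1)$ is indivisible upstairs. To rule this out I would use that the Lefschetz-type theorem cited in the excerpt (see~\cite[Proposition~1.4]{Ma99}) compares the full cohomology of $X$ with that of $\P$ below the middle degree, so that $H^2(X,\mathbb{Z}) \cong H^2(\P,\mathbb{Z})$ and the map on $\Pic$ induced by restriction is an isomorphism, not just an inclusion of a subgroup. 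The well-formedness hypothesis enters here to ensure $\P$ has no torsion phenomena in its divisor class group obstructing this identification, and the dimension bound $\dim X \ge 3$ is exactly what makes the Lefschetz comparison valid in the relevant degrees. Once the isomorphism is in place, indivisibility of $\O_X(1)$ is immediate from indivisibility of $\O_{\P}(1)$ in $\mathrm{Cl}(\P)$, which is the defining feature of a well formed weighted projective space.
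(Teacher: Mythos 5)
The paper itself offers no proof of this lemma; it is quoted from \cite[Remark~4.2]{Okada2} and \cite[Proposition~2.3]{PST17}, so your attempt must stand on its own. You correctly isolate the crux --- it is not enough to know $\Pic(X)\cong\mathbb{Z}$; one must show that $\O_\P(1)\vert_X$ is a \emph{generator} --- but the step you propose to close this gap does not work. First, the Lefschetz-type theorem of Mavlyutov cited in the paper is a statement about Hodge numbers (rational cohomology), so it cannot detect divisibility, which is an integral phenomenon. Second, and more seriously, even granting an integral Lefschetz isomorphism $H^2(\P,\mathbb{Z})\cong H^2(X,\mathbb{Z})$ (which does hold for ample complete intersections in singular ambient spaces, e.g.\ by Goresky--MacPherson), the class of $\O_\P(1)$ is not a well-defined element of $H^2(\P,\mathbb{Z})$: it is only a Weil divisor class, not a line bundle on $\P$, and the integral generator of $H^2(\P,\mathbb{Z})$ is in general of the form $c_1(\O_\P(m))$ for some $m>1$ depending on the weights. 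Already for the well formed quadric cone $\P(1,1,2)$ one computes that $H^2(\P,\mathbb{Z})\cong\mathbb{Z}$ is generated by $c_1(\O(2))$ and that the restriction $H^2(\P,\mathbb{Z})\to H^2(\P^{\mathrm{sm}},\mathbb{Z})$ is multiplication by $2$. So indivisibility of the generator of $\mathrm{Cl}(\P)$ does not transfer to $X$ along a comparison with $H^2(\P,\mathbb{Z})$; at best that comparison controls the divisibility of $\O_X(m)$, which is the wrong statement. Well-formedness guarantees $\mathrm{Cl}(\P)=\mathbb{Z}\cdot\O_\P(1)$, not that this class generates integral cohomology.

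The correct comparison is with the smooth locus $\P^{\mathrm{sm}}\supset X$, where $\O(1)$ is an honest line bundle generating $\Pic(\P^{\mathrm{sm}})=\mathrm{Cl}(\P)$ --- but the Lefschetz hyperplane theorem does not apply to this open variety, which is why the references argue differently. The proof in \cite{PST17} works with the punctured affine cone $Y=\pi^{-1}(X)\subset\mathbb{A}^{N+1}\setminus\{0\}$: since $X$ is smooth and well formed, it lies in the locus where the $\CC^*$-action is free, so $Y\to X$ is a principal $\CC^*$-bundle whose Euler class is $c_1(\O_X(1))$ up to sign; quasi-smoothness makes $Y$ a smooth affine complete intersection in $\mathbb{A}^{N+1}\setminus\{0\}$, which is $2$-connected once $\dim X\ge 3$ by Hamm's Lefschetz theorem for the affine complement, and the homotopy exact sequence of the fibration $\CC^*\to Y\to X$ then yields $\pi_1(X)=0$ and $\pi_2(X)\cong\mathbb{Z}$ generated so that $H^2(X,\mathbb{Z})\cong\mathbb{Z}$ is generated by that Euler class. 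Your write-up substitutes the comparison with the singular $\P$ for this cone argument, and that is precisely where the proof breaks.
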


For a weighted complete intersection $X$
of multidegree $(d_1,\ldots,d_k)$ in $\P$ we denote
$$
i_X=\sum a_i-\sum d_j.
$$
It is easy to describe the canonical class of a weighted complete intersection.

\begin{theorem}[{see~\cite[Theorem 3.3.4]{Do82}, \cite[6.14]{IF00}}]
\label{theorem:adjunction}
Let $X$ be a smooth
well formed weighted complete intersection of multidegree $(d_1,\ldots,d_k)$
in~$\P$.
Then
$$
\omega_X\simeq \O_X\left(-i_X\right).
$$
\end{theorem}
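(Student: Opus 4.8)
The plan is to deduce the formula from the ordinary adjunction sequence. Although $\P$ is singular in general, the subvariety $X$ is contained in its smooth locus, so all of the relevant sheaves are genuine line bundles along $X$ and the usual adjunction applies verbatim.

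First I would recall the computation of the canonical sheaf of the ambient space. Since $\P$ is well formed, one has $\omega_\P=\O_\P(-\sum a_j)$ as reflexive sheaves (equivalently, the canonical class is $-\sum a_j H$ in the class group, where $H$ is the class of $\O_\P(1)$); this is the standard computation for a well formed weighted projective space, see~\cite[Theorem~3.3.4]{Do82}. Because $X$ is smooth and well formed, it lies in the smooth locus $U\subset\P$ (see~\cite[Proposition~2.11]{PrzyalkowskiShramov-Weighted}), and over $U$ both $\O_\P(1)$ and $\omega_\P$ are honest line bundles. Restricting to $X$ then gives a genuine identity of line bundles $\omega_\P|_X=\O_X(-\sum a_j)$.

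Next I would identify the conormal sheaf of $X$ in $\P$. By definition $X$ is cut out by a regular sequence $f_1,\ldots,f_k$ of homogeneous elements with $\deg f_i=d_i$, so the conormal sheaf $\mathcal I/\mathcal I^2$ is locally free of rank $k$, and each $f_i$ determines a global generator of a direct summand, giving
$$
\mathcal I/\mathcal I^2\cong\bigoplus_{i=1}^k\O_X(-d_i),\qquad\text{hence}\qquad \det\left(\mathcal I/\mathcal I^2\right)=\O_X\left(-\sum d_i\right).
$$
Working over the smooth locus $U$, where $X\hookrightarrow U$ is a closed embedding of smooth varieties, the conormal exact sequence
$$
0\to\mathcal I/\mathcal I^2\to\Omega^1_{\P}|_X\to\Omega^1_X\to 0
$$
is exact with locally free terms. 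Taking top exterior powers yields $\omega_\P|_X=\det(\mathcal I/\mathcal I^2)\otimes\omega_X$, that is,
$$
\omega_X=\omega_\P|_X\otimes\O_X\left(\sum d_i\right)=\O_X\left(\sum d_i-\sum a_j\right),
$$
which is the desired assertion.

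The main obstacle is not any single computation but rather the bookkeeping needed to guarantee that these are identities of \emph{line bundles}, and not merely of reflexive sheaves or Weil divisor classes: this is precisely where well formedness enters, since it ensures $X\subset U$ and that $\O_\P(1)|_X$ is invertible. Once that is in place, the argument is the standard adjunction on the smooth locus, and the regular sequence hypothesis in the definition of a weighted complete intersection is exactly what makes the conormal sheaf split as above.
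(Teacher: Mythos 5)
The paper does not prove this statement; it is quoted directly from the literature (Dolgachev~\cite[Theorem~3.3.4]{Do82} and Iano-Fletcher~\cite[6.14]{IF00}), so there is no internal proof to compare against. Your argument is correct and is essentially the standard one given in those references: restrict to the smooth locus (where well formedness guarantees $X$ sits and where the divisorial sheaves $\O_\P(m)$ are honest line bundles), split the conormal sheaf using the regular sequence, and apply ordinary adjunction.
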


In this paper we will be mostly concerned with Fano weighted complete intersections.
There are various results bounding the relevant parameters.
Recall from~\cite[Corollary~5.3(i)]{PST17} that {if there exists a smooth well formed Fano weighted
complete intersection in $\P$, then $a_{0}=1$. In this case we will define} the number $0\le {l_{\PP}}\le N$ by the conditions
\begin{equation}\label{eq:l}
1=a_0=\ldots=a_{l_{\PP}}<a_{{l_{\PP}}+1}
\end{equation}
if $a_N>1$,
and put ${l_{\PP}}=N$ otherwise; thus ${l_{\PP}}+1$ is the number of weights among $a_i$'s that are equal to~$1$.

\begin{theorem}\label{theorem:Fano-invariants}
Let $X$ be a smooth well formed Fano weighted complete intersection
of codimension $k$ and dimension $n=N-k$
in $\P$ which is not an intersection with a linear cone.
Then the following inequalities hold:
\begin{itemize}
\item[(i)] $a_N\le N$;

\item[(ii)] $k\le n$;

\item[(iii)] ${l_{\PP}}\ge k$.
\end{itemize}
\end{theorem}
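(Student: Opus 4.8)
The plan is to reduce the three inequalities to an interplay between the Fano condition, expressed through the adjunction formula, and the combinatorial constraints that smoothness (hence quasi-smoothness, by~\cite[Corollary~2.14]{PrzyalkowskiShramov-Weighted}) together with the ``not a linear cone'' condition impose on the weights and degrees. By Theorem~\ref{theorem:adjunction} the Fano condition is equivalent to $i_X=\sum a_i-\sum d_j>0$, and not being an intersection with a linear cone means exactly that $d_j\neq a_i$ for all $i$ and $j$. I would first record the two ``engines'' of the proof and then extract each inequality by counting.

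The first engine is an upper bound on the index. For $n\ge 3$ the Lefschetz theorem gives $\mathrm{Pic}(X)\cong\mathbb{Z}$, and by Lemma~\ref{lemma:Okada} the class of $\mathcal{O}_{\mathbb{P}}(1)\vert_X$ is a primitive generator; since $-K_X=\mathcal{O}_X(i_X)$ by Theorem~\ref{theorem:adjunction}, the Fano index of $X$ equals $i_X$, so the Kobayashi--Ochiai bound yields $i_X\le n+1=N-k+1$. Rewriting this, I obtain the clean inequality
\[
\sum_{i=0}^{N}(a_i-1)\ \le\ \sum_{j=1}^{k}(d_j-1),
\]
which I will use repeatedly; the finitely many cases $n\le 2$ are settled directly from the classification recalled in the appendix.

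The second engine is a divisibility statement. Applying Lemma~\ref{lemma:a-vs-d} with $t=1$ to a single weight $a_i>1$ produces a degree $d_{j}$ divisible by $a_i$, and since $d_j\neq a_i$ this forces $d_j\ge 2a_i$; thus every weight exceeding $1$ divides, and is at most half of, some degree. For~(i) I would apply this to $a_N$ to get $a_N\le d_k/2$, and then combine the divisibility constraints for all large weights with the Fano inequality $\sum a_i>\sum d_j$: because the large weights must all divide the few degrees, they cannot accumulate, and together with the index inequality this squeezes $a_N\le N$. A clean illustration of the mechanism is that $\mathbb{P}(2,3,5)$ supports no such hypersurface, since all of $2,3,5$ would have to divide a single degree $d<10$. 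I regard closing this counting---turning the divisibility-plus-Fano constraints into the exact bound $a_N\le N$ rather than a weaker one---as the \textbf{main obstacle}; an alternative, Jacobian-based route is to analyse quasi-smoothness at each coordinate vertex $P_i$, where either some equation carries a pure power $x_i^{m}$ (giving again $d\ge 2a_i$) or a system-of-distinct-representatives argument on the Jacobian forces $k$ equations with monomials $x_i^{m}x_t$ and hence $d_1>a_i$.

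For~(iii) I would count weight-$1$ variables. The definition~\eqref{eq:l} presupposes $a_0=1$, which itself follows from the divisibility engine: were all weights greater than $1$, each would divide some degree, contradicting the Fano inequality exactly as in the $\mathbb{P}(2,3,5)$ example. The displayed inequality restricted to the $N-l$ weights exceeding $1$ gives $\sum_{i>l}(a_i-1)\le\sum_j(d_j-1)$, and I would upgrade this to a matching assigning to the large weights distinct degrees, the not-linear-cone condition guaranteeing the strict inequalities; counting the degrees so used yields $l\ge k$. Finally~(ii) follows by combining~(iii) with the observation that each of the $k$ degrees is at least $2$ and must be supported on the variables, so that $2k\le N$. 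Here too the delicate point is to make the weight-to-degree matching injective while respecting the multiplicities with which a given degree can absorb several weights, which is the same combinatorial obstacle already met in~(i).
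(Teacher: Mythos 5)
There is a genuine gap --- in fact three. The paper does not prove this theorem at all: it cites \cite[Theorem~1.1]{PrzyalkowskiShramov-Weighted} for (i), \cite[Theorem~1.3]{ChenChenChen} for (ii), and \cite[Corollary~5.3(i)]{PST17} for (iii), each of which is a substantial result in its own right. Your attempt to give a self-contained proof correctly assembles the right raw materials (adjunction, the Fano inequality $\sum a_i>\sum d_j$, the divisibility Lemma~\ref{lemma:a-vs-d}, the not-a-linear-cone condition, Kobayashi--Ochiai), but the decisive counting steps are not carried out. For (i) you explicitly flag the passage from ``each large weight divides, and is at most half of, some degree'' to the exact bound $a_N\le N$ as the main obstacle and leave it open; this passage \emph{is} the theorem of \cite{PrzyalkowskiShramov-Weighted}, whose proof needs the full strength of Lemma~\ref{lemma:a-vs-d} for all $t$ (not just $t=1$) and a delicate analysis of how several weights can share divisors with several degrees. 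Nothing in your sketch rules out, say, a single huge degree $d_k$ absorbing many large weights.

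For (iii) the proposed mechanism is not merely incomplete but wrong in shape: an injective matching of the weights $a_i>1$ to distinct degrees would bound the number $N-l$ of large weights by $k$, i.e.\ would prove $l\ge n$, which is false --- for the threefold $X_6\subset\P(1^3,2,3)$ of Table~\ref{table:dim 3} one has $l=2<3=n$, and indeed the two large weights $2$ and $3$ both divide the unique degree $6$, so no such matching exists; yet $l\ge k$ holds. The statement $l\ge k$ is a lower bound on the number of \emph{unit} weights in terms of the codimension alone, and it does not follow from any assignment of large weights to degrees; the argument in \cite{PST17} is of a different nature. Finally, for (ii) the claim that $k\le n$ ``follows by combining (iii) with the observation that each of the $k$ degrees is at least $2$ and must be supported on the variables'' is not a proof: distinct defining polynomials may involve overlapping sets of variables, and $l\ge k$ gives no control on $N-l$. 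This inequality is exactly \cite[Theorem~1.3]{ChenChenChen}, proved there by a careful analysis of quasi-smoothness, and I see no way to extract it from your two engines. So while the auxiliary observations (e.g.\ $d_j\ge 2a_i$ whenever $a_i>1$ divides $d_j$, the rewriting $\sum(a_i-1)\le\sum(d_j-1)$, the $\P(2,3,5)$ example) are correct, none of the three assertions is actually established.
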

\begin{proof}
Assertion~(i) is proved in~\cite[Theorem~1.1]{PrzyalkowskiShramov-Weighted}.
Assertion~(ii) is~\cite[Theorem~1.3]{ChenChenChen}.
Assertion~(iii) follows from~\cite[Corollary~5.3(i)]{PST17}.
\end{proof}

Using the bounds provided by Theorem~\ref{theorem:Fano-invariants},
one can easily obtain the well known lists of all smooth Fano weighted complete intersections of small dimensions.

\begin{lemma}
\label{lemma:small-dimension}
Let $X$ be a smooth well formed Fano weighted complete intersection of dimension $n$
in $\P$ which is not an intersection with a linear cone.
If $n=1$, then $X$ is a conic in~$\P^2$.
If $n=2$, then $X$ is one of the four types of del Pezzo surfaces listed
in Table~\ref{table:dim 2}.
If $n=3$, then $X$ is one of the nine types of Fano threefolds listed
in Table~\ref{table:dim 3}.
\end{lemma}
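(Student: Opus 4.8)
The plan is to enumerate all possibilities allowed by the numerical constraints from Theorem~\ref{theorem:Fano-invariants} and then discard the ones that fail to be smooth, well formed, or Fano. First I would record the basic setup: write $X$ as a weighted complete intersection of multidegree $(d_1,\ldots,d_k)$ in $\P=\P(a_0,\ldots,a_N)$ with $a_0\le\ldots\le a_N$ and $a_0=\ldots=a_l=1$ as in~\eqref{eq:l}, and recall that $n=N-k$ and $k\ge 1$. The Fano condition means, by Theorem~\ref{theorem:adjunction}, that $i_X=\sum a_j-\sum d_i>0$. The three inequalities $a_N\le N$, $k\le n$, and $l\ge k$ give a finite search space once $n\in\{1,2,3\}$ is fixed, since then $N=n+k\le 2n$ is bounded, hence all weights are bounded by $a_N\le N\le 2n$ and there are finitely many admissible weight vectors; for each, the degrees $d_i$ are bounded by the Fano inequality together with the non-linear-cone condition $d_i\ne a_j$.

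Next I would run the enumeration dimension by dimension. For $n=1$ we have $k\le 1$, so $k=1$ and $N=2$; the bound $a_2\le 2$ together with $l\ge 1$ forces $\P=\P^2$ (if some $a_i=2$ the well-formedness and non-linear-cone conditions eliminate it, or it reduces to a conic after a coordinate change), and the Fano plus non-linear-cone conditions leave only $d_1=2$, a conic. For $n=2$ we have $k\le 2$ and $N\le 4$, so I would split into the hypersurface case $k=1$ and the codimension-two case $k=2$, listing the admissible $(a_0,\ldots,a_N;d_1,\ldots,d_k)$ in each and checking Fano-ness, well-formedness via the gcd conditions, and smoothness via Lemma~\ref{lemma:a-vs-d}; the survivors should be exactly the four del Pezzo families in Table~\ref{table:dim 2}. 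The $n=3$ case is the same procedure with $N\le 6$, producing the nine families in Table~\ref{table:dim 3}; here the list is longer but each candidate is verified by the same three checks.

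The main obstacle is the verification step rather than the enumeration: for each numerical candidate I must confirm that a \emph{general} member is actually smooth and well formed, not merely quasi-smooth, and that the tuple is not secretly an intersection with a linear cone in disguise or a well-formedness-violating configuration. The key tool is Lemma~\ref{lemma:a-vs-d}, which constrains which repeated or nontrivial weights can occur: whenever a subset of weights shares a common factor $\delta>1$, there must be a matching subset of degrees divisible by $\delta$, and combined with the non-linear-cone hypothesis this rules out most weight vectors with weights exceeding~$1$. I expect the nontrivial-weight cases to be the delicate ones, since for those one has to invoke Lemma~\ref{lemma:a-vs-d} repeatedly to show either that the configuration is impossible or that it reduces, after discarding a linear cone, to a lower-dimensional known case.

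Finally, I would note that this is a well known classification, so rather than reproducing every elimination I would organize the argument as a clean finite case analysis keyed to the bounds of Theorem~\ref{theorem:Fano-invariants}, present the surviving families in the appendix tables, and refer to the standard references (such as~\cite{IF00} and~\cite{IP99}) for the del Pezzo and Fano threefold classifications to which the output is matched.
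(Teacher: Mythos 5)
Your proposal is correct and matches the paper's own (essentially unwritten) argument: the paper simply asserts that the lists follow from the bounds of Theorem~\ref{theorem:Fano-invariants}, and the appendix explicitly endorses exactly your finite enumeration keyed to $a_N\le N$, $k\le n$, $l\ge k$, the Fano and non-linear-cone conditions, and Lemma~\ref{lemma:a-vs-d}, as an alternative to matching against the known del Pezzo and Picard rank one Fano threefold classifications. No substantive difference to report.
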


\begin{remark}\label{remark:index}
By Lemma~\ref{lemma:Okada} and Theorem~\ref{theorem:adjunction}
the number $i_X$ equals the Fano index of~$X$ provided that $X$ is a smooth Fano variety of dimension $n\ge 3$.
By Lemma~\ref{lemma:small-dimension} this is also the case
if $X$ is a del Pezzo surface, i.e. a smooth
Fano variety of dimension~\mbox{$n=2$}.
However, if $X$ is a conic in $\P^2$, then $i_X=1$, while the Fano
index of $X$ equals~$2$.
\end{remark}

It is possible to bound $i_X$ in terms of dimension of a smooth Fano weighted complete intersection~$X$.

\begin{theorem}
\label{theorem:low-coindex}
Let $X$ be a smooth well formed Fano weighted complete intersection of multidegree $(d_1,\ldots,d_k)$
and dimension $n=N-k\ge 2$ in $\P$ which is not an intersection with a linear cone. Then
\begin{itemize}
\item[(i)] $X$ is not isomorphic to $\P^n$;

\item[(ii)] one has $i_X\le n$.
\end{itemize}
\end{theorem}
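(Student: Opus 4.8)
The plan is to establish the substantive bound (ii) first and then read off (i) almost for free. Indeed, by Remark~\ref{remark:index} the number $i_X$ equals the Fano index of $X$ for all $n\ge 2$ (the conic being a one-dimensional exception, irrelevant here); since $\P^n$ has Fano index $n+1$, the bound (ii) would force $n+1=i_X\le n$, a contradiction, so $X\not\cong\P^n$. Thus everything reduces to proving $i_X\le n$.

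First I would recast the target inequality in a divisibility-friendly shape. Writing $\sum_i a_i=(N+1)+\sum_i(a_i-1)$ and $\sum_j d_j=k+\sum_j(d_j-1)$, the inequality $i_X=\sum_i a_i-\sum_j d_j\le N-k=n$ is equivalent to
\[
\sum_{j=1}^{k}(d_j-1)\ \ge\ 1+\sum_{i=0}^{N}(a_i-1),
\]
where only the weights $a_i>1$ contribute on the right. The arithmetic engine is the Euler identity $m-1=\sum_{\delta\ge 2}\varphi(\delta)\,[\delta\mid m]$, a consequence of $\sum_{\delta\mid m}\varphi(\delta)=m$. Setting $W_\delta=\#\{i:\delta\mid a_i\}$ and $D_\delta=\#\{j:\delta\mid d_j\}$, this identity rewrites the two sides as $\sum_i(a_i-1)=\sum_{\delta\ge 2}\varphi(\delta)W_\delta$ and $\sum_j(d_j-1)=\sum_{\delta\ge 2}\varphi(\delta)D_\delta$. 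As $\varphi(\delta)\ge 1$ for every $\delta$, the whole statement then reduces to the termwise estimates $D_\delta\ge W_\delta$ for all $\delta\ge 2$, improved to a single strict inequality to produce the extra $+1$.

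To obtain $D_\delta\ge W_\delta$ I would apply Lemma~\ref{lemma:a-vs-d} to the collection of all $W_\delta$ weights divisible by $\delta$: their greatest common divisor is divisible by $\delta>1$, so the lemma yields $W_\delta$ degrees whose greatest common divisor is divisible by $\delta$, giving $D_\delta\ge W_\delta$ — provided $W_\delta\le k$, which the lemma requires. This proviso is the one point that needs real care, and I expect it to be the main obstacle; I would settle it using smoothness. Since a smooth well formed $X$ is contained in the smooth locus of $\P$, it misses every stratum $\Sigma_p=\{x_i=0:p\nmid a_i\}$ with $p$ prime and $W_p\ge 2$, because such a stratum lies in $\operatorname{Sing}\P$. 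But $\Sigma_p\cong\P(\{a_i:p\mid a_i\})$ has dimension $W_p-1$, while $X$ has codimension $k$ in $\P$; so if $W_p\ge k+1$ then $\dim\Sigma_p\ge k$ and, by the dimension theorem for projective varieties, $X\cap\Sigma_p\ne\varnothing$, a contradiction. Hence $W_p\le k$ for every prime $p$, and therefore $W_\delta\le W_p\le k$ whenever $p\mid\delta$.

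Finally I would produce the strict inequality by excluding the equality case $D_\delta=W_\delta$ for all $\delta\ge 2$. By Möbius inversion the functions $\delta\mapsto W_\delta$ and $\delta\mapsto D_\delta$ determine, respectively, the multiset $\{a_i:a_i>1\}$ and the multiset $\{d_1,\dots,d_k\}$, both consisting of integers $\ge 2$; so the equality would force these two multisets to coincide. Since $k\ge 1$, this gives $d_j=a_i$ for some $i$ and $j$, i.e.\ $X$ is an intersection with a linear cone, contrary to hypothesis. Consequently at least one term of $\sum_{\delta\ge 2}\varphi(\delta)(D_\delta-W_\delta)$ is strictly positive, which supplies the required $+1$ and finishes (ii); as noted above, (i) then follows immediately. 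The only delicate step is the bound $W_\delta\le k$ that lets Lemma~\ref{lemma:a-vs-d} apply; the totient identity and Möbius inversion are routine bookkeeping.
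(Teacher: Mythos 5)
Your proposal is correct, but it runs the paper's argument in reverse and with a completely different engine, so it is worth recording the comparison. The paper proves (i) first and directly: assuming $X\cong\P^n$, it combines the inequalities $d_{k-i}>a_{N-i}$ from \cite[Lemma~3.1]{PrzyalkowskiShramov-Weighted} with the bound $l\ge i_X=n+1$ from \cite[Corollary~5.11]{PST17} to get $\prod d_j>\prod a_i$, contradicting $H^n=\prod d_j/\prod a_i=1$ for the generator $H$ of $\Pic(X)$ supplied by Lemma~\ref{lemma:Okada}; assertion (ii) is then deduced from (i) via the classical bound (Kobayashi--Ochiai, \cite[Corollary~3.1.15]{IP99}) on the index of a Fano variety not isomorphic to $\P^n$. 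You instead prove (ii) directly by purely arithmetic means --- the totient identity $m-1=\sum_{\delta\ge 2}\varphi(\delta)[\delta\mid m]$ plus the termwise comparison $D_\delta\ge W_\delta$ extracted from Lemma~\ref{lemma:a-vs-d} --- and recover (i) as a corollary. You correctly isolate the one delicate point, the applicability condition $W_\delta\le k$, and your justification is sound: for well formed $\P$ the stratum where only the coordinates with $p\mid a_i$ survive lies in $\mathrm{Sing}\,\P$, a smooth well formed $X$ avoids $\mathrm{Sing}\,\P$, and a codimension-$k$ complete intersection must meet every subvariety of dimension at least $k$. What your route buys is self-containedness (no appeal to \cite{PST17} or to the Kobayashi--Ochiai theorem) and an identification of the exact defect $i_X=n+1-\sum_{\delta\ge2}\varphi(\delta)(D_\delta-W_\delta)$; what it costs is the extra bookkeeping with M\"obius inversion in the strictness step. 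One caveat you share with the paper: the strictness step invokes the hypothesis that $X$ is not an intersection with a linear cone, which is absent from the statement as printed but clearly intended (a linear $\P^n\subset\P^{n+k}$ violates both (i) and (ii)), and the paper's own proof needs it just as much, since both the inequalities $d_{k-i}>a_{N-i}$ and the classification in Lemma~\ref{lemma:small-dimension} assume it; under that hypothesis you should also note explicitly that all $d_j\ge 2$, which is what licenses identifying the multiset of degrees with the multiset of weights exceeding $1$.
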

\begin{proof}
By Lemma~\ref{lemma:small-dimension} we may assume that $n\ge 3$, so that by Remark~\ref{remark:index}
the Fano index of $X$ equals $i_X$.
Suppose that $X\cong\P^n$, so that $i_X=n+1$.

Recall that $a_0\le\ldots\le a_N$.
We may also assume that $d_1\le\ldots\le d_k$.
Then one has~\mbox{$d_{k-i}>a_{N-i}$} for all $0\le i\le k-1$,
see~\cite[Lemma~3.1(i),(ii)]{PrzyalkowskiShramov-Weighted}.
On the other hand, for the number ${l_{\PP}}$ defined by~\eqref{eq:l} one has
$$
{l_{\PP}}\ge i_X=n+1=N-k+1
$$
by~\cite[Corollary~5.11]{PST17}.
This implies
\[
\prod\limits_{j=1}^k d_j>\prod\limits_{i=N-k+1}^N a_i=\prod\limits_{i=0}^N a_i.
\]

Now let $H$ be the class of the line bundle $\O_X(1)$ in~\mbox{$\Pic(X)$}.
Then $H$ is not divisible in~\mbox{$\Pic(X)$} by Lemma~\ref{lemma:Okada}, which means that $H$ is the class of a
hyperplane on~$\P^n$.
Hence, one has
$$
1=H^n=\frac{\prod_{j=1}^k d_j}{\prod_{i=0}^N a_i}>1,
$$
which gives a contradiction. This proves assertion~(i).

To deduce assertion~(ii) from~(i), recall that the Fano index of an arbitrary smooth $n$-dimensional Fano variety
$Y$ is bounded
by $n$ provided that $Y$ is not isomorphic to $\P^n$, see for instance~\cite[Corollary 3.1.15]{IP99}.
\end{proof}

\smallskip

Put
$$
S=\CC[x_0,\ldots,x_{N}, w_1,\ldots,w_k].
$$
Let $f_1,\ldots,f_k$ be polynomials in $\CC[x_0,\ldots,x_{N}]$ of weighted degrees $d_1,\ldots,d_k$
that generate the weighted homogeneous ideal of $X$. Let
$$
F=F(f_1,\ldots,f_k)=
w_1f_1+\ldots+w_kf_k\in S.
$$
Denote by $J=J(F)$ the ideal in $S$ generated by
\begin{equation}\label{eq:generators-J}
\frac{\partial F}{\partial w_1},\ldots, \frac{\partial F}{\partial w_k},
\frac{\partial F}{\partial x_0},\ldots,\frac{\partial F}{\partial x_{N}}.
\end{equation}
Put
$$
R=R(f_1,\ldots,f_k)=S/J.
$$

{The algebra $S$ is bigraded by $\deg (x_s)=(0,a_s)$ and $\deg(w_j)=(1,-d_j)$,
so that $F$ is a bihomogeneous polynomial of bidegree~$(1,0)$.
Thus, the bigrading descends to the ring~$R$.}

Suppose that $X$ is smooth, and put $n=N-k=\dim X$.
Let $h_{pr}^{n-q,q}(X)$ be primitive
middle Hodge numbers of~$X$, that is,
$$h_{pr}^{p,q}(X)=h^{p,q}(X)$$ for $p\neq q$ and
$$h_{pr}^{p,q}(X)=h^{p,q}(X)-1$$ otherwise.
The following theorem will be our main tool to compute and estimate
Hodge numbers of weighted complete intersections.

\begin{theorem}[{see~\cite{Di95},~\cite{Gr69},~\cite[{Proposition~2.16}]{Na97},~\cite[{Theorem 3.6}]{Ma99}}]
\label{theorem:middle-Hodge-numbers}
One has
$$
h_{pr}^{q,n-q}(X)=\dim R_{q,-i_X}.
$$
\end{theorem}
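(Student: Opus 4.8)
The plan is to reduce the computation of the primitive middle Hodge numbers of $X$ to the cohomology of a Koszul-type complex built from the polynomials $f_1,\ldots,f_k$, and then to identify the relevant cohomology with a graded piece of the Jacobian ring $R$. The starting point is Griffiths' description of the Hodge filtration on the primitive middle cohomology of a smooth hypersurface in terms of residues of rational differential forms, which expresses $h^{q,n-q}_{pr}$ as the dimension of a graded component of the Jacobian ring of the defining polynomial. For a hypersurface $\{f=0\}$ in an ordinary projective space this is classical; the content here is to extend it to weighted complete intersections of arbitrary codimension $k$.

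First I would pass from the codimension-$k$ complete intersection $X\subset\P$ to a single hypersurface by introducing the auxiliary variables $w_1,\ldots,w_k$ and the polynomial $F=\sum_j w_j f_j$ living in the total space $S=\CC[x_0,\ldots,x_N,w_1,\ldots,w_k]$. The key geometric input is that $\{F=0\}$ (or, more precisely, the associated projective bundle / Cayley trick construction) has cohomology related to that of $X$: the bigrading $\deg(x_s)=(0,a_s)$, $\deg(w_j)=(1,-d_j)$ records exactly the extra homogeneity coming from the $w_j$, and $J(F)$ is the Jacobian ideal of $F$ with respect to all the variables, as in~\eqref{eq:generators-J}. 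I would then invoke the weighted version of the Griffiths--Dwork method (the references~\cite{Di95},~\cite{Gr69},~\cite{Na97},~\cite{Ma99}) to identify the primitive cohomology of $X$ in bidegree corresponding to the Hodge type $(q,n-q)$ with the graded piece $R_{q,\ast}$ of the bigraded ring $R=S/J$.

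The next step is to pin down the precise second grading, i.e.\ to verify that the correct component is $R_{q,-i_X}$ rather than some shifted index. This is where the adjunction formula (Theorem~\ref{theorem:adjunction}) enters: the canonical class $\omega_X=\O_X(\sum d_i-\sum a_j)=\O_X(-i_X)$ controls the residue map, since the top-degree holomorphic form on $\P$ contributes the shift $\sum a_j$ and each defining equation contributes $-d_j$. Matching the weight of the relevant residue form against the Hodge bidegree $(q,n-q)$ yields exactly the second index $-i_X$. I would carry out this bookkeeping carefully, keeping track of the contribution of the Jacobian with respect to the $w_j$ variables (the derivatives $\partial F/\partial w_j=f_j$), which is what cuts the ambient cohomology down to $X$ rather than to $\{F=0\}$.

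The main obstacle is not the formal residue calculus but the justification that the Griffiths--Dwork description remains valid in the weighted and higher-codimension setting, where $\P$ is singular and $\O_\P(1)$ need not be invertible. Here I would lean on the smoothness and well-formedness hypotheses: by Theorem~\ref{theorem:Fano-invariants} and the cited results, $X$ lies in the smooth locus of $\P$, so $\O_X(1)$ is a genuine line bundle and the pure Hodge structure on the primitive middle cohomology is well defined. The technical heart is therefore to check that the spectral sequence (or long exact sequence) computing $H^n_{pr}(X)$ degenerates as required and that the associated graded identifies with $R_{q,-i_X}$; since this is precisely the content of the cited theorems, I would attribute the detailed verification to~\cite{Ma99,Na97} and restrict my proof to assembling the bigrading conventions so that the stated index $-i_X$ comes out correctly.
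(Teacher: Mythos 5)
The paper offers no proof of this statement at all---it is quoted directly from~\cite{Di95},~\cite{Gr69},~\cite{Na97}, and~\cite{Ma99}---and your proposal correctly sketches the standard Cayley-trick/Griffiths--Dwork argument contained in those references (with the bigrading and the index $-i_X$ matching the paper's conventions) while explicitly deferring the technical verification to the same sources. So your approach is essentially identical to the paper's, which simply cites these results.
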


\smallskip
Recall that the number ${l_{\PP}}$ is defined by~\eqref{eq:l}. In~\S\ref{section:bounds} we will need the following notion.

\begin{definition}
\label{definition:Fermat}
We say that a weighted complete intersection
$X\subset\P$ of codimension $k$ is
\emph{of Fermat type}
if $X$ is given by equations of the form
\begin{equation}
\label{eq:Fermat}
\aligned
\alpha_{1,0} x_0^{d_1}+&\ldots+\alpha_{1,{l_{\PP}}}x_{l_{\PP}}^{d_1}+\widehat{f}_1=0,\\
&\ldots\\
\alpha_{k,0} x_0^{d_k}+&\ldots+\alpha_{k,{l_{\PP}}}x_{l_{\PP}}^{d_k}+\widehat{f}_k=0,
\endaligned
\end{equation}
where $\widehat{f}_j$, $1\le j\le k$, are weighted homogeneous polynomials
of degree $d_j$ that depend only on variables
$x_{{l_{\PP}}+1},\ldots,x_N$.
\end{definition}

Note that in the notation of Definition~\ref{definition:Fermat} one has ${l_{\PP}}\ge k$ by Theorem~\ref{theorem:Fano-invariants}(iii).

\begin{lemma}
\label{lemma:exists-Fermat}
Suppose that there exists a smooth well formed
weighted complete intersection $X$ in $\P$.
Then there
exists a smooth well formed weighted complete intersection of Fermat type of the same multidegree
in $\P$.
\end{lemma}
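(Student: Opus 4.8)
The plan is to prove the existence of a Fermat-type model by exhibiting an explicit family of equations of the shape~\eqref{eq:Fermat} and verifying that a general member is both well formed and smooth, using the hypothesis that at least one smooth well formed weighted complete intersection of this multidegree already exists. First I would analyze which of the degrees~$d_j$ can actually appear as pure powers $x_i^{d_j}$ of the coordinates of weight~$1$: since $\deg x_i^{d_j}=d_j$ for each $0\le i\le l$ (recall $a_0=\ldots=a_l=1$ by~\eqref{eq:l}), every monomial $x_i^{d_j}$ is available as a summand in the $j$-th equation. The auxiliary polynomials $\widehat f_j$ in the remaining variables $x_{l+1},\ldots,x_N$ are then simply chosen to make up the weighted-homogeneous equations of degree $d_j$; one natural choice is to copy the parts of the original defining equations of the given $X$ that involve only $x_{l+1},\ldots,x_N$, and add generic diagonal terms $\sum_{i=0}^l \alpha_{j,i}x_i^{d_j}$ with $\alpha_{j,i}$ generic. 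The key point is that the Fermat-type condition constrains only the \emph{form} of the equations, not their degrees, so the multidegree $(d_1,\ldots,d_k)$ and the ambient $\P$ are preserved automatically.

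Next I would check quasi-smoothness, hence smoothness, of a general such Fermat-type intersection. The standard tool is the Jacobian/Bertini argument: a weighted complete intersection is quasi-smooth precisely when the affine cone minus the origin is smooth, which one tests by the rank of the Jacobian matrix of the $f_j$. The diagonal terms $\sum_i\alpha_{j,i}x_i^{d_j}$ contribute derivatives $d_j\alpha_{j,i}x_i^{d_j-1}$, which are nonvanishing away from the coordinate hyperplanes $\{x_i=0\}$ for $0\le i\le l$; combined with genericity of the $\alpha_{j,i}$ and of $\widehat f_j$, this should force the Jacobian to have maximal rank $k$ everywhere on the punctured cone. The cleanest way to organize this is a Bertini-type statement: the linear system of Fermat-type equations of each degree $d_j$ is base-point free on the complement of the locus where all relevant derivatives vanish simultaneously, and one shows that this bad locus is empty in the quasi-smooth range because the existence of \emph{some} smooth $X$ of this multidegree guarantees that the degrees $d_j$ are large enough relative to the weights for the diagonal monomials to span. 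In fact, one expects that the conditions of Lemma~\ref{lemma:a-vs-d} — which any smooth well formed member must satisfy — are exactly what is needed to guarantee that the generic Fermat-type member is also smooth and well formed.

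The main obstacle will be verifying well formedness, i.e.\ the condition $\mathrm{codim}_X(X\cap\mathrm{Sing}\,\P)\ge 2$, rather than smoothness per se. Well formedness is a statement about how $X$ meets the singular strata of $\P$, and these strata are governed by common divisors of subsets of the weights $a_i$; the subtlety is that the Fermat-type model might meet a singular stratum in too large a dimension even when the original $X$ does not. The strategy here is to invoke the numerical criterion relating weights and degrees for well formed quasi-smooth intersections (Lemma~\ref{lemma:a-vs-d}): since the given $X$ is smooth and well formed, for every subset of weights with nontrivial common divisor $\delta>1$ there is a matching collection of degrees divisible by $\delta$, and this purely numerical compatibility is inherited by \emph{any} intersection of the same multidegree in the same $\P$, in particular by the Fermat-type one. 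Thus well formedness, being detected by this same numerical data together with quasi-smoothness, transfers to the general Fermat-type member. I would conclude by noting that since the set of smooth well formed Fermat-type members is the complement of a proper closed subset in the (nonempty) parameter space of Fermat-type equations, a general such member has all the desired properties, which proves the lemma.
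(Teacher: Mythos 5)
Your construction is the right one --- and in fact matches the paper's: keep the part $\widehat f_j$ of each original equation involving only $x_{l+1},\ldots,x_N$ and replace the rest by general diagonal terms $\sum_{i\le l}\alpha_{j,i}x_i^{d_j}$ in the weight-one variables. But the verification has genuine gaps. First, ``quasi-smoothness, hence smoothness'' is false: a quasi-smooth weighted complete intersection can be singular where it meets $\mathrm{Sing}\,\P$, and controlling that intersection is precisely the hard part here. The paper's key observation, which you are missing, is that both the diagonal terms and the discarded terms $g_j$ vanish on the stratum $\Lambda=\{x_0=\cdots=x_l=0\}$, so $X'\cap\Lambda=X\cap\Lambda$; since $\mathrm{Sing}\,\P\subset\Lambda$ and the given smooth well formed $X$ is disjoint from $\mathrm{Sing}\,\P$, so is $X'$. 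This single observation delivers well formedness directly --- not via Lemma~\ref{lemma:a-vs-d}, which is only a \emph{necessary} numerical condition and cannot by itself certify well formedness or smoothness of the new member, contrary to your ``exactly what is needed'' claim.

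Second, your Bertini step does not go through as stated. The linear system you are varying (only the $\alpha_{j,i}$, with $\widehat f_j$ fixed) is not base-point free: its base locus sits inside $\Lambda$, where the diagonal monomials $x_i^{d_j}$ vanish identically and hence ``span'' nothing. The paper resolves this by analyzing the restriction to $\Lambda\cong\P(a_{l+1},\ldots,a_N)$, where the system becomes the complete linear system of degree $d_j$, whose base locus is a coordinate stratum that necessarily contains a singular point of $\P$ unless it is empty; combined with the disjointness of $X'$ from $\mathrm{Sing}\,\P$, Bertini then yields smoothness. Finally, you assert that the multidegree is ``preserved automatically,'' but one must actually check that the $H_j$ cut out a subvariety of codimension $k$ (equivalently, a regular sequence); the paper does this by intersecting with the complementary stratum $\Pi=\{x_{l+1}=\cdots=x_N=0\}\cong\P^l$ and using that Weil divisors on $\P$ are $\QQ$-Cartier. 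Without these three ingredients the proof is incomplete.
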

\begin{proof}
Let $X$ be a smooth well formed
weighted complete intersection in $\P$ given by
equations $f_1=\ldots=f_k=0$, where the weighted degree of $f_j$ equals~$d_j$.
Write~\mbox{$f_j=g_j+\widehat{f}_j$}, where $\widehat{f}_j$
is a weighted homogeneous polynomial
of degree $d_j$ that depends only on
variables~\mbox{$x_{{l_{\PP}}+1},\ldots,x_N$}, while every monomial of $g_j$
is divisible by some of the variables~\mbox{$x_0,\ldots,x_{l_{\PP}}$}.
For every~\mbox{$1\le j\le k$} define a hypersurface~$D_j$ in~$\P$
by equation
\begin{equation}\label{eq:S}
\alpha_{j,0} x_0^{d_j}+\ldots+\alpha_{j,{l_{\PP}}}x_{l_{\PP}}^{d_j}+\widehat{f}_j=0,
\end{equation}
where $\alpha_{j,i}$ are general coefficients.

Let $X'$ be the intersection of $D_1,\ldots,D_k$, so that
$X'$ is given by equations~\eqref{eq:Fermat}.
We claim that $X'$ is a weighted complete intersection (of multidegree
$(d_1,\ldots,d_k)$), so by construction it of Fermat type.

Let $\Pi\subset\P$ be the stratum given by equations $x_{{l_{\PP}}+1}=\ldots=x_N=0$.
Put
$$
X'_j=D_1\cap\ldots\cap D_j
$$
for $0\le j\le k$, so that
$X'_0=\P$ and $X'_k=X'$.
For every $0\le t\le k$
define the variety $X_t''$ as the intersection of $X_t'$
with the stratum~$\Pi$.
One can easily see that every Weil divisor on $\P$ and hence on
$X_j'$ is $\mathbb{Q}$-Cartier.
Hence the codimension
of $X_j'$ in $X_{j-1}'$ equals the codimension
of $X_j''$ in $X_{j-1}''$.
Since $\Pi\cong\P^{l_{\PP}}$ and the coefficients $\alpha_{j,i}$ in~\eqref{eq:S} are general, the latter
codimension equals~$1$.
This means that the codimension of $X'=X_k'$ in $\P$ equals~$k$,
so that $X'$ is a weighted complete intersection.

To deduce the assertion of the lemma, check that $X'$ is well formed and smooth.
This follows from the claim that $X'$ is disjoint from the
singular locus $\Sigma$ of $\P$. Indeed, let~\mbox{$\Lambda\subset\P$} be the stratum given
by equations $x_0=\ldots=x_{l_{\PP}}=0$. Since
every monomial of~$g_j$
vanishes at $\Lambda$, we see that
$X\cap\Lambda=X'\cap\Lambda$. On the other hand,
$\Sigma$ is contained in $\Lambda$, see~\cite[5.15]{IF00}.
Since $X$ is well formed and smooth, it must be disjoint
from $\Sigma$ by~\mbox{\cite[Proposition 8]{Di86}},
which means that $X'$ is disjoint from
$\Sigma$ as well.
In particular, this implies that $X'$ is well formed.
The rest thing we need to do is to prove that $X'$ does not have singularities outside of $\Sigma$.

Let $\mathcal{D}_j$ be the linear system of
all hypersurfaces
given by equations of the form~\eqref{eq:S}, and let~$\mathcal{D}_j'$
be its restriction to $X'_{j-1}$.
We claim that if $\mathcal{D}_j'$ has a base point on~$X'_{j-1}$, then
it has a base point in $\Sigma\cap X'_{j-1}$. Indeed, suppose that
$P\in X'_{j-1}$ is a base point of~$\mathcal{D}_j'$.
Then it is also a base point of $\mathcal{D}_j$.
Obviously, the base locus $\Bs \mathcal{D}_j$
is contained in $\Lambda$.
The stratum~$\Lambda$ is itself a (possibly not well formed) weighted projective space
$$
\widehat{\P}\cong\P(a_{{l_{\PP}}+1},\ldots,a_N)
$$
with weighted homogeneous coordinates $x_{{l_{\PP}}+1},\ldots,x_N$. The restriction
of $\mathcal{D}_j$ to $\Lambda$ is the \emph{complete} linear system $\widehat{\mathcal{D}}_j$ of hypersurfaces of
weighted degree $d_j$ in $\widehat{\P}$, so $\Bs {\mathcal{D}}_j=\Bs\widehat{\mathcal{D}}_j$. Using the action of the automorphism group
of $\widehat{\P}$, we see that $\Bs \widehat{\mathcal{D}}_j$ is a (possibly empty) union $\widehat{\Lambda}$ of strata
given by the vanishing of some coordinates among $x_{{l_{\PP}}+1},\ldots,x_N$.
Thus~$\widehat{\Lambda}$,
considered as a subset of $\P$, contains a singular point of $\PP$, since
$\widehat{\Lambda}\ni P$ is nonempty.

Applying Bertini's theorem we see that singularities of a general member of $\mathcal{D}_j'$ are contained in $\Bs \mathcal{D}_j'$.
The claim above shows that if the general member is singular, it has singularity on $\Sigma$.
Now the assertion of the lemma follows from the fact that $X'=X'_k$ is disjoint from~$\Sigma$.
\end{proof}

The following simple computation concerning complete intersections in usual
projective spaces will be used in the proof of Lemma~\ref{lemma:s1<k}.

\begin{lemma}\label{lemma:dim-polynomials}
Suppose that $X\subset\P^l=\mathrm{Proj}\,\CC[x_0,\ldots,x_l]$ is a complete intersection
of hypersurfaces given by homogeneous polynomials $f_1,\ldots,f_s$. Let $V$ be the graded component of degree $r$ of the quotient
al\-geb\-ra~\mbox{$\CC[x_0,\ldots,x_l]/(f_1,\ldots,f_s)$}. Then
$$
\dim V\ge {r+l-s\choose r}.
$$
Moreover, if $r>0$, then one has
$$
\dim V\ge {r+l-s\choose r}+s.
$$
\end{lemma}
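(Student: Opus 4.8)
The plan is to reduce the statement to a count of monomials in a polynomial ring, using the fact that a complete intersection is cut out by a \emph{regular} sequence. Since $f_1,\ldots,f_s$ form a regular sequence in $\CC[x_0,\ldots,x_l]$, multiplication by each $f_i$ is injective modulo the previous ones, and the quotient algebra $A=\CC[x_0,\ldots,x_l]/(f_1,\ldots,f_s)$ has Hilbert series equal to the product
\[
\frac{\prod_{i=1}^s (1-t^{e_i})}{(1-t)^{l+1}},
\]
where $e_i=\deg f_i$. The key structural input is thus the exactness of the Koszul complex on a regular sequence, which controls $\dim V=\dim A_r$ exactly. However, for the lower bound I do not actually need the precise Hilbert series; I only need a clean way to produce enough independent elements of $A_r$.

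First I would observe that the right-hand side ${r+l-s\choose r}$ is precisely the dimension of the degree-$r$ part of a polynomial ring in $l-s+1$ variables. So the natural strategy is to exhibit a graded injection from $\CC[y_0,\ldots,y_{l-s}]_r$ into $A_r$. The cleanest realization: after a general linear change of coordinates I may assume the leading terms of $f_1,\ldots,f_s$ involve the last $s$ variables in a triangular fashion, so that $s$ of the coordinates become redundant modulo the ideal; concretely, I would argue that the images of the monomials in the first $l-s+1$ variables $x_0,\ldots,x_{l-s}$ remain linearly independent in $A_r$. The point is that any linear relation among these images would give an element of $(f_1,\ldots,f_s)$ supported on monomials in $x_0,\ldots,x_{l-s}$ of degree $r$; by a degree and regularity argument such an element must vanish. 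This yields $\dim V\ge{r+l-s\choose r}$.

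For the sharper bound when $r>0$, I would add $s$ further independent elements to the $\binom{r+l-s}{r}$ already found. The natural candidates come from the $s$ defining equations themselves: consider, for each $j$, a monomial of degree $r$ involving one of the ``eliminated'' variables $x_{l-s+1},\ldots,x_l$ in a way that is not reachable from the monomials in $x_0,\ldots,x_{l-s}$ alone. Since $r>0$ there is room for at least one variable slot, and because the $f_j$ have distinct eliminating roles, these $s$ classes are independent of each other and of the first family. I would verify independence by the same regularity/degree bookkeeping as before.

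The main obstacle I anticipate is making the ``triangular elimination'' rigorous without assuming the $f_j$ are in any special position: the hypothesis only says $X$ is a complete intersection, i.e. $f_1,\ldots,f_s$ form a regular sequence, and one cannot freely normalize the leading forms by a \emph{linear} change when the $\deg f_i$ differ. The safe route is therefore to avoid explicit elimination and instead bound $\dim A_r$ from below purely via the Hilbert series identity above, checking that the coefficient of $t^r$ in $\prod_i(1-t^{e_i})/(1-t)^{l+1}$ dominates both $\binom{r+l-s}{r}$ and $\binom{r+l-s}{r}+s$; the extra $+s$ for $r>0$ should emerge from the lowest-order correction terms $-\sum_i t^{e_i}$ being outweighed by the growth of $(1-t)^{-(l+1)}$, which I expect to reduce to an elementary but slightly delicate inequality among binomial coefficients that I would treat as the technical heart of the argument.
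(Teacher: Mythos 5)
You end up proposing two routes, and the one you settle on (the ``safe route'' via the Hilbert series) is exactly the paper's route --- but you stop just short of the step that actually proves the lemma, and the way you propose to finish it would make that step harder than it needs to be. You plan to show that the coefficient of $t^r$ in $\prod_i(1-t^{e_i})/(1-t)^{l+1}$ dominates $\binom{r+l-s}{r}$ (resp.\ $\binom{r+l-s}{r}+s$) by arguing that the negative correction terms $-\sum_i t^{e_i}$ are ``outweighed'' by the growth of $(1-t)^{-(l+1)}$. Handled literally, this is an inclusion--exclusion with alternating signs, and it is not clear how the $+s$ would fall out of such an estimate; you yourself flag it as the unproven ``technical heart.'' The missing idea is to eliminate the signs entirely: cancel each numerator factor against one factor of the denominator, writing
$$
\frac{\prod_{i=1}^s(1-t^{e_i})}{(1-t)^{l+1}}=\prod_{i=1}^s\bigl(1+t+\ldots+t^{e_i-1}\bigr)\cdot\frac{1}{(1-t)^{l+1-s}},
$$
so that
$$
\dim V=\sum_{t_1=0}^{e_1-1}\ldots\sum_{t_s=0}^{e_s-1}\binom{r-t_1-\ldots-t_s+l-s}{l-s},
$$
a sum of manifestly non-negative terms. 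The summand with $t_1=\ldots=t_s=0$ is $\binom{r+l-s}{r}$, giving the first bound; for $r>0$ the $s$ summands with exactly one $t_j=1$ each contribute $\binom{r-1+l-s}{l-s}\ge 1$, giving the $+s$. With this rewriting there is no delicate inequality left to check.

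Your first sketched route (a triangular coordinate change making $s$ variables redundant, so that monomials in $x_0,\ldots,x_{l-s}$ inject into $A_r$) is indeed not viable as stated --- a linear change of coordinates cannot normalize leading forms of different degrees, and nothing in the hypotheses puts the $f_j$ in such a position --- but you correctly identified and discarded it, so this is not the gap. The gap is only that the decisive coefficient extraction is asserted rather than performed, and performed along a sign-free factorization rather than the signed expansion you describe.
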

\begin{proof}
Let $d_1,\ldots,d_s$ be the degrees of the polynomials $f_1,\ldots,f_s$, respectively.
The Poincar\'e series of the complete intersection 	
$X$ is given by the well known formula
$$
P_X(z)=\frac{(1-z^{d_1})\cdot\ldots\cdot(1-z^{d_s})}{(1-z)^{l+1}}=\frac{(1+z+\ldots+z^{d_1-1})\cdot\ldots\cdot(1+z+\ldots+z^{d_s-1})}{(1-z)^{l-s+1}},
$$
see for instance \cite[Corollary~3.3]{Stanley}.
This immediately implies the equality
\begin{equation}\label{eq:dimV}
\dim V=\sum\limits_{t_1=0}^{d_1-1}\ldots\sum\limits_{t_{s}=0}^{d_{s}-1}{r-t_1-\ldots-t_{s}+l-s\choose l-s}.
\end{equation}
To obtain the first lower bound for $\dim V$ consider the summand corresponding to
$t_1=\ldots=t_{s}=0$ on the right hand side of~\eqref{eq:dimV}.
To obtain the second lower bound in the case of positive $r$ consider also the summands
corresponding to
$$
t_j=1, \quad t_1=\ldots=t_{j-1}=t_{j+1}=\ldots=t_{s}=0
$$
for all $1\le j\le s$.
\end{proof}

\section{Bounds for Hodge numbers}
\label{section:bounds}

Throughout
this section $\P=\P(a_0,\ldots,a_N)$, $a_0\le \ldots\le a_N$, denotes a well formed weighted projective space.
As in~\S\ref{section:preliminaries}, define the number~\mbox{$0\le {l_{\PP}}\le N$} by conditions~\eqref{eq:l}
if~\mbox{$a_N>1$}, and put ${l_{\PP}}=N$ otherwise; in other
words,~\mbox{${l_{\PP}}+1$} is the number of~\mbox{$x_i$'s}
of weight~\mbox{$a_i=1$}. Besides this, we use the following notation.

\begin{notation*}
Let $X$ be a smooth well formed Fano weighted
complete intersection of multidegree $(d_1,\ldots,d_k)$
in $\P$.
Suppose that $d_1\le\ldots\le d_k$.
Define $s_X$ to be the maximal index $s$ such that $d_s<d_k$;
if $d_1=\ldots=d_k$, we put $s_X=0$. For convenience we denote ${d_X}=d_k$.
\end{notation*}

\begin{lemma}\label{lemma:only-quadrics}
Let $X$ be a smooth well formed Fano weighted
complete intersection
in $\P$ which is not an intersection with a linear cone.
The following assertions hold.
\begin{itemize}
\item[(i)]
If ${d_X}=2$, then $X$ is a complete intersection of $k$ quadrics in~\mbox{$\P\cong\P^N$}.

\item[(ii)]
If ${d_X}=3$, then $X$ is a complete intersection of ${s_X}$ quadrics and $k-{s_X}$ cubics in~\mbox{$\P\cong\P^N$}.
\end{itemize}
\end{lemma}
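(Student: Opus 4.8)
The plan is to fix the degrees and the weights separately, the only real input being Lemma~\ref{lemma:a-vs-d} together with the hypothesis that $X$ is not an intersection with a linear cone. First I would record two preliminary facts. Since $X$ has codimension $k\ge 1$, Theorem~\ref{theorem:Fano-invariants}(iii) gives $l\ge k\ge 1$, so by the definition~\eqref{eq:l} of $l$ one has $a_0=1$. Because $X$ is not an intersection with a linear cone, one has $d_j\ne a_i$ for all $i$ and $j$; in particular $d_j\ne a_0=1$, so every degree satisfies $d_j\ge 2$.

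Next I would read off the degrees. In case (i) the inequalities $2\le d_j\le d_k=d=2$ force $d_j=2$ for all $j$, so the $f_j$ cut out $k$ quadrics. In case (ii) we have $2\le d_j\le 3$, so each $d_j$ is either $2$ or $3$; by the definition of $s$ the degrees $d_1,\ldots,d_s$ are equal to $2$ and $d_{s+1},\ldots,d_k$ are equal to $3$, which gives $s$ quadrics and $k-s$ cubics. In both cases it then remains to prove that $\P\cong\P^N$, that is, that $a_N=1$.

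For the last step I would argue by contradiction. Suppose $a_N\ge 2$ and apply Lemma~\ref{lemma:a-vs-d} with $t=1$ to the single weight $a_N$, whose greatest common divisor is $\delta=a_N>1$; the lemma produces a degree $d_{s_1}$ divisible by $a_N$. Since $2\le d_{s_1}\le d\le 3$ and $a_N\ge 2$ divides $d_{s_1}$, the only divisor of $d_{s_1}\in\{2,3\}$ that is at least $2$ is $d_{s_1}$ itself, so $d_{s_1}=a_N$. But then $X$ is an intersection with a linear cone, contradicting our standing assumption. Hence $a_N=1$, all weights equal $1$, and $\P\cong\P^N$, which completes both assertions. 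The computation is short, and the single point to be careful about is exactly this last reduction: one must combine the divisibility constraint coming from Lemma~\ref{lemma:a-vs-d} with the smallness of $d$ to conclude that the offending weight coincides with a degree, which is what triggers the linear-cone contradiction.
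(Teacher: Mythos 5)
Your proof is correct and follows essentially the same route as the paper: both arguments apply Lemma~\ref{lemma:a-vs-d} with $t=1$ to any weight greater than $1$ and then use the assumption that $X$ is not an intersection with a linear cone to force all weights to equal $1$, after which the degrees are pinned down immediately. Your version merely packages the two cases into a single divisibility argument and makes explicit (via $l\ge k$, hence $a_0=1$, hence $d_j\ge 2$) a point the paper leaves implicit.
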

\begin{proof}
One has
$a_i\le {d_X}$ for all $0\le i\le N$ by Lemma~\ref{lemma:a-vs-d}.
Since $X$ is not an intersection with a linear
cone, we see that $a_i\neq {d_X}$ for all $0\le i\le N$.
In particular, if ${d_X}=2$,
then we have $a_i=1$ for all $0\le i\le N$, which proves assertion~(i).

Now suppose that ${d_X}=3$. Let $d$ be the minimum of the degrees
of the defining equations of $X$.
If for some $i$ one has $a_i=2$, then one must have $d=2$ by Lemma~\ref{lemma:a-vs-d}.
This again means that $X$
is an intersection with a linear cone, which is not the case by assumption.
Thus $a_i=1$ for all $0\le i\le N$, which proves assertion~(ii).
\end{proof}

Given a smooth well formed Fano weighted
complete intersection $X$ in $\P$, we put
$$
p_X=\left\lceil\frac{i_X}{{d_X}}\right\rceil \ \ \ \text{and} \ \ \ {r_X}=p_X{d_X}-i_X,
$$
so that $i_X=p_X{d_X}$ if and only if ${r_X}=0$.
The strategy of our proof of Theorems~\ref{theorem:main} and~\ref{theorem:main-CY}
and Proposition~\ref{proposition:Hodge-level} is to find, following the notation introduced before Theorem~\ref{theorem:middle-Hodge-numbers},
a certain more or less explicit subspace in $R_{p_X,-i_X}$ to show that the corresponding
Hodge number does not vanish.

\begin{lemma}
\label{lemma:s1<k}
Let $X$ be a smooth well formed Fano weighted
complete intersection in $\P$.
One has
$$
h_{pr}^{p_X,n-p_X}(X)\ge \binom{p_X+k-{s_X}-1}{p_X}\cdot\binom{{r_X}+{l_{\PP}}-{s_X}}{{r_X}}
$$
if ${r_X}<{d_X}-1$, and
%\begin{multline*}
%h_{pr}^{p_X,n-p_X}(X)\ge\\
%\ge \binom{p_X+k-{s_X}-1}{p_X}\cdot
%\left(\binom{{d_X}-1+{l_{\PP}}-{s_X}}{{d_X}-1}+{s_X}\right)
%-({l_{\PP}}+1)\cdot \binom{p_X+k-{s_X}-2}{p_X-1}
%\end{multline*}
$$
h_{pr}^{p_X,n-p_X}(X)>\binom{p_X+k-{s_X}-1}{p_X}\cdot
\left(\binom{{d_X}-1+{l_{\PP}}-{s_X}}{{d_X}-1}+{s_X}-l_{\PP}-1\right)
$$
if ${r_X}={d_X}-1$.
\end{lemma}

\begin{proof}
Recall that the Hodge numbers are constant in smooth families of projective varieties
since they are upper semicontinuous, and their sums
are dimensions of cohomology, which are locally constant by Ehresmann
theorem.
Hence by Lemma~\ref{lemma:exists-Fermat}
one can assume that~$X$ is of Fermat type. Furthermore, we can assume that the coefficients $\alpha_{j,i}$
in the expansion of the polynomials $f_j$ as in~\eqref{eq:Fermat}
are general.

Define the ideal $J'$ in $S=\CC[x_0,\ldots,x_N, w_1,\ldots,w_k]$ as one generated by
\begin{equation*}%\label{eq:generators-J-prime}
x_{l_{\PP}+1},\ldots,x_N, w_1,\ldots,w_{s_X}, (x_0,\ldots,x_{l_{\PP}})^{d_X}, f_1,\ldots, f_{s_X},
\frac{\partial F}{\partial x_0},\ldots, \frac{\partial F}{\partial x_{l_{\PP}}},
\end{equation*}
where $(x_0,\ldots,x_{l_{\PP}})^{d_X}$ denotes the ideal generated by all monomials of degree $d_X$ in
the variables $x_0,\ldots,x_{l_{\PP}}$. Since $X$ is of Fermat type, it is straightforward to check that~\mbox{$J\subset J'$}.
Let $\bar{J}$ be the image of $J'$ in $R$, so that $\bar{J}$ is the ideal in $R$ generated by
\begin{equation*}
x_{l_{\PP}+1},\ldots,x_N, w_1,\ldots,w_{s_X}, (x_0,\ldots,x_{l_{\PP}})^{d_X},
\frac{\partial F_R}{\partial x_0},\ldots, \frac{\partial F_R}{\partial x_{l_{\PP}}},
\end{equation*}
where $F_R$ is the image of $F$ in $R$.
Set $\bar{R}=R/\bar{J}\cong S/J'$.

Denote by $\tilde{f}_j$ the polynomial obtained from $f_j$ by substituting $x_{l_{\PP}+1}=\ldots=x_N=0$;
in other words, $\tilde{f}_j$ is the ``Fermat part'' of $f_j$, and $\tilde{f}_j=f_j-\widehat{f}_j$ in the notation of Definition~\ref{definition:Fermat}.
Note that since the coefficients $\alpha_{j,i}$  are general, the polynomials $\tilde{f}_j$ define a complete intersection in the usual
projective space $\PP^{l_{\PP}}$.
Let $\tilde{F}$ be the polynomial obtained from $F$ by substituting
$$
x_{l_{\PP}+1}=\ldots=x_N=w_1=\ldots=w_{s_X}=0,
$$
so that
$$
\tilde{F}=\sum\limits_{j=s_X+1}^kw_j\tilde{f}_j.
$$
Let $\tilde{J}$ be the ideal in $\CC[x_0,\ldots,x_{l_{\PP}}, w_{s_X+1},\ldots,w_k]$
generated by
\begin{equation*}
(x_0,\ldots,x_{l_{\PP}})^{d_X},
\frac{\partial \tilde{F}}{\partial x_0},\ldots, \frac{\partial \tilde{F}}{\partial x_{l_{\PP}}},
\tilde{f}_1,\ldots,\tilde{f}_{s_X},
\end{equation*}
so that
$$
S/J'\cong \CC[x_0,\ldots,x_{l_{\PP}}, w_{s_X+1},\ldots,w_k]/\tilde{J}.
$$
Also, define $\bar{R}^0$ as the quotient of $\CC[x_0,\ldots,x_{l_{\PP}}]$ by the ideal generated by
$$
(x_0,\ldots,x_{l_{\PP}})^{d_X}, \tilde{f}_1,\ldots,\tilde{f}_{s_X},
$$
and let $\bar{J}^0$ be the ideal in $\bar{R}^0[w_{s_X+1},\ldots,w_k]$ generated by
$$
\frac{\partial \tilde{F}}{\partial x_0},\ldots, \frac{\partial \tilde{F}}{\partial x_{l_{\PP}}}.
$$
We have isomorphisms of bigraded algebras
\begin{equation*}%\label{eq:algebras}
\bar{R}\cong\CC[x_0,\ldots,x_{l_{\PP}}, w_{s_X+1},\ldots,w_k]/\tilde{J}\cong\bar{R}^0[w_{s_X+1},\ldots,w_k]/\bar{J}^0.
\end{equation*}

Note that the ideal $\bar{J}^0$ is generated by $l_{\PP}+1$ polynomials, and each of them
is linear in the variables $w_i$ and has degree $d_X-1$ in variables $x_0,\ldots,x_{l_{\PP}}$.
Consider the grading on the algebra $\CC[w_{s_X+1},\ldots,w_k]$ such that the variables $w_j$ have degree $1$.
For all non-negative integers $p$ and $r$ we have
$$
\dim\bar{R}_{p,r-pd_X}=\dim \bar{R}^0_{r}\cdot\dim \CC[w_{s_X+1},\ldots,w_k]_{p}= \dim \bar{R}^0_{r}\cdot\binom{p+k-s_X-1}{p}
$$
if $r\le d_X-2$, and
\begin{multline*}
\dim\bar{R}_{p,d_X-1-pd_X}=\\=\dim \bar{R}^0_{d_X-1}\cdot\dim \CC[w_{s_X+1},\ldots,w_k]_{p}-(l_{\PP}+1)\cdot \dim \CC[w_{s_X+1},\ldots,w_k]_{p-1}=\\=
\dim \bar{R}^0_{d_X-1}\cdot\binom{p+k-s_X-1}{p}- (l_{\PP}+1)\cdot\binom{p+k-s_X-2}{p-1}.
\end{multline*}
On the other hand, by Lemma~\ref{lemma:dim-polynomials} one has
$$
\dim \bar{R}^0_{r}\ge \binom{r+l_{\PP}-s_X}{r},
$$
and moreover
$$
\dim \bar{R}^0_{r}\ge \binom{r+l_{\PP}-s_X}{r}+s_X
$$
if $r>0$.
Since $d_X\ge 2$, we have in particular
$$
\dim \bar{R}^0_{d_X-1}\ge \binom{d_X-1+l_{\PP}-s_X}{d_X-1}+s_X.
$$
Note that
$$
\dim R_{p_X,-i_X}=\dim R_{p_X,r_X-p_Xd_X}\ge \dim\bar{R}_{p_X,r_X-p_Xd_X}.
$$
Therefore, we have
$$
\dim R_{p_X,-i_X}\ge \binom{r_X+l_{\PP}-s_X}{r_X}\cdot\binom{p_X+k-s_X-1}{p_X}
$$
if $r_X<d_X-1$, so that the first assertion of the lemma is implied by Theorem~\ref{theorem:middle-Hodge-numbers}.
In the case if $r_X=d_X-1$, we get
\begin{multline*}
\dim R_{p_X,-i_X}\ge\\ \ge \binom{p_X+k-{s_X}-1}{p_X}\cdot\left(\binom{{d_X}-1+{l_{\PP}}-{s_X}}{{d_X}-1}+{s_X}\right)
-({l_{\PP}}+1)\cdot \binom{p_X+k-{s_X}-2}{p_X-1}.
\end{multline*}
Since
$$
\binom{p_X+k-{s_X}-1}{p_X}>\binom{p_X+k-{s_X}-2}{p_X-1},
$$
we have
$$
\dim R_{p_X,-i_X}>\binom{p_X+k-{s_X}-1}{p_X}\cdot
\left(\binom{{d_X}-1+{l_{\PP}}-{s_X}}{{d_X}-1}+{s_X}-l_{\PP}-1\right)
$$
when ${r_X}={d_X}-1$.
Now Theorem~\ref{theorem:middle-Hodge-numbers} implies the second assertion of the lemma as well.
\end{proof}

\begin{lemma}
\label{lemma:equal 1}
Let $X$ be a smooth well formed Fano weighted
complete intersection in $\P$.
Suppose that $k={s_X}+1$ and ${r_X}=0$. Then $h^{p_X,n-p_X}_{pr}(X)=1$.
\end{lemma}

\begin{proof}
Bidegree estimates show that
the bigraded component of bidegree $(p_X,-i_X)$ in
the algebra $S=\CC[x_0,\ldots,x_N,w_1,\ldots,w_k]$
is generated by
the monomial~$w_k^{p_X}$. On the other hand, one obviously has $w_k^{p_X}\notin J$,
{and hence the bigraded component $R_{p_X,-i_X}$ is one-dimensional. Thus the assertion follows from Theorem~\ref{theorem:middle-Hodge-numbers}.}
\end{proof}

Now we are able to deduce the following positivity result.

\begin{corollary}\label{corollary:greater-than-1}
Let $X$ be a smooth well formed Fano weighted
complete intersection
in~$\P$ which is not an intersection with a linear cone.
Suppose that $X$ is not a complete intersection of quadrics
in $\P=\P^N$. Then~\mbox{$h_{pr}^{p_X,n-p_X}(X)$} is positive.
Moreover, one has~\mbox{$h_{pr}^{p_X,n-p_X}(X)=1$} if and only if $k={s_X}+1$ and~\mbox{${r_X}=0$}.
\end{corollary}
\begin{proof}
Note that ${d_X}>2$ by Lemma~\ref{lemma:only-quadrics}(i), and ${s_X}<k$ by definition.
Also, one has $l_{\PP}\ge k$ by Theorem~\ref{theorem:Fano-invariants}(iii).
In particular, we have
\begin{equation}\label{eq:second-factor}
\binom{{r_X}+l_{\PP}-{s_X}}{{r_X}}\ge 1,
\end{equation}
and the equality holds if and only if ${r_X}=0$. Since $p_X>0$, we also have
\begin{equation}\label{eq:first-factor}
\binom{p_X+k-{s_X}-1}{p_X}\ge 1,
\end{equation}
and the equality holds if and only if $k={s_X}+1$.

Suppose that ${r_X}<{d_X}-1$.
Then
$$
h_{pr}^{p_X,n-p_X}(X)\ge \binom{p_X+k-{s_X}-1}{p_X}\cdot\binom{{r_X}+{l_{\PP}}-{s_X}}{{r_X}}
$$
by Lemma~\ref{lemma:s1<k}.
By~\eqref{eq:second-factor} and~\eqref{eq:first-factor}
this gives $h_{pr}^{p_X,n-p_X}(X)\ge 1$, and the equality can hold only
if $k={s_X}+1$ and ${r_X}=0$. On the other hand, if $k={s_X}+1$ and ${r_X}=0$, then~\mbox{$h_{pr}^{p_X,n-p_X}(X)=1$} by Lemma~\ref{lemma:equal 1}.

Now suppose that ${r_X}={d_X}-1$. Then
$$
h_{pr}^{p_X,n-p_X}(X)>\binom{p_X+k-{s_X}-1}{p_X}\cdot
\left(\binom{{d_X}-1+{l_{\PP}}-{s_X}}{{d_X}-1}+{s_X}-l_{\PP}-1\right)
$$
Furthermore,
since ${d_X}\ge 3$, we have
$$
\binom{{d_X}-1+{l_{\PP}}-{s_X}}{{d_X}-1}+{s_X}-{l_{\PP}}-1\ge ({d_X}-1+{l_{\PP}}-{s_X})+{s_X}-{l_{\PP}}-1={d_X}-2\ge 1.
$$
Therefore, keeping in mind inequality~\eqref{eq:first-factor}, we obtain
$h_{pr}^{p_X,n-p_X}(X)>1$ in this case.
\end{proof}

\begin{lemma}\label{lemma:vanishing}
Let $X$ be a smooth well formed Fano weighted
complete intersection
in $\P$.
Then $h_{pr}^{q,n-q}(X)=0$ for every $q<p_X$.
\end{lemma}

\begin{proof}
Suppose that $h_{pr}^{q,n-q}(X)>0$ for some $q<p_X$.
Then Theorem~\ref{theorem:middle-Hodge-numbers} implies that there is a monomial
$$
m_w\cdot m_x\in S=\CC[x_0,\ldots,x_N,w_1,\ldots,w_k]
$$
of bidegree~\mbox{$(q,-i_X)$}, where
$m_w$ is a monomial in~$w_i$
of degree~$q$, and~$m_x$ is a monomial in~$x_j$.
Let~\mbox{$(q,-t)$} be the bidegree of~$m_w$. Then $t\le q{d_X}$, and the bidegree of~$m_x$ is~\mbox{$(0,t-i_X)$}. But since~\mbox{$q\le p_X-1$}, one has
$$
t-i_X\le q{d_X}-i_X\le  (p_X-1){d_X}-i_X={r_X}-{d_X}<0,
$$
which gives a contradiction.
\end{proof}

\begin{lemma}
\label{lemma:int-of-quadrics}
Let $X\subset\P^N$ be a smooth Fano complete intersection of $k$ quadrics
of dimension
$n=N-k$ in $\P^N$.
The following assertions hold.
\begin{itemize}
\item[(i)] Suppose that
$k=1$, so that $X$ is a quadric hypersurface.
Then~\mbox{$h^{q,n-q}_{pr}(X)=0$} for all $q$,
with the only exception $h^{\frac{n}{2},\frac{n}{2}}_{pr}(X)=1$ for even dimension~$n$.

\item[(ii)]
Suppose that $k>1$. Then
$h^{q,n-q}_{pr}(X)=0$ for $q<p_X$. Moreover, one has
$$
h^{p_X,n-p_X}_{pr}(X)= \binom{\frac{N-1}{2}}{k-1}>1
$$
for even $i_X=N-2k+1$, and
$$
h^{p_X,n-p_X}_{pr}(X)\ge k\cdot \left(\binom{\frac{N}{2}}{\frac{N}{2}-k+1}-\binom{\frac{N}{2}-1}{\frac{N}{2}-k}\right)>1.
$$
for odd $i_X$.
\end{itemize}
\end{lemma}

\begin{proof}
Assertion~(i) is obvious and well known.

Let us prove assertion~(ii).
The vanishing of $h_{pr}^{q,n-q}(X)$ for all $q<p_X$ follows from Lemma~\ref{lemma:vanishing}.
For the estimates from below we will
use Theorem~\ref{theorem:middle-Hodge-numbers}.
Similarly to the proof of Lemma~\ref{lemma:s1<k}, one can assume that $X$ is of Fermat type.
We have $i_X=n-k+1$ and~\mbox{$p_X=\lceil\frac{n-k+1}{2}\rceil$}.

Suppose that $i_X$ is even, so that $p_X=\frac{n-k+1}{2}$.
Let $\bar{J}$ be the ideal in $R$ generated by the variables $x_0,\ldots,x_N$.
Then there is a natural surjective map of bigraded algebras
$$
R\to R/\bar{J}\cong\CC[w_1,\ldots,w_k].
$$
Note that the kernel of this map has zero intersection with $R_{p_X,-i_X}$, and consider the
new grading on $\CC[w_1,\ldots,w_k]$ such that the variables $w_1,\ldots,w_k$ have degree $1$. One has
$$
h^{p_X,n-p_X}_{pr}(X)=\dim R_{p_X,-i_X}=\dim \CC[w_1,\ldots,w_k]_{p_X}=\binom{\frac{n+k-1}{2}}{k-1}=\binom{\frac{N-1}{2}}{k-1}>1.
$$

Now suppose that $i_X$ is odd, so that $p_X=\frac{n-k}{2}+1$.
Starting from the complete intersection $X$ written in the Fermat form~\eqref{eq:Fermat}
and making linear changes of coordinates if necessary, we can assume that the quadric polynomials generating the homogeneous ideal of $X$
have the form
$$
f_j=x_{j-1}^2+\sum\limits_{i\ge k} \alpha_{j,i}x_i^2, \quad 1\le j\le k.
$$
Let $\bar{J}$ be the ideal in $R$ generated by
$$
(x_0,\ldots,x_{k-1})^2, x_k,\ldots,x_N,
$$
where $(x_0,\ldots,x_{k-1})^2$ is the ideal generated by all monomials of degree $2$ in $x_0,\ldots,x_{k-1}$.
Let $\tilde{J}$ be the ideal in $\CC[x_0,\ldots,x_{k-1},w_1,\ldots,w_k]$ generated by $(x_0,\ldots,x_{k-1})^2$ and $k$ monomials
$$
w_1x_0,\ldots,w_kx_{k-1}.
$$
Then there is a natural surjective map of bigraded algebras
$$
R\to R/\bar{J}\cong\CC[x_0,\ldots,x_{k-1},w_1,\ldots,w_k]/\tilde{J}.
$$
Thus the dimension
$\dim R_{p_X,-i_X}=\dim R_{p_X,1-2p_X}$
is bounded from below by the dimension
of the bigraded component of bidegree $(p_X,1-2p_X)$ of the algebra~\mbox{$\CC[x_0,\ldots,x_{k-1},w_1,\ldots,w_k]/\tilde{J}$}.
Define the grading on $\CC[w_1,\ldots,w_k]$ so that the degree of the variables
$w_1,\ldots,w_k$ equals $1$.
One has
\begin{multline*}
\dim R_{p_X,-i_X}\ge\\ \ge
\dim\CC[x_0,\ldots,x_{k-1},w_1,\ldots,w_k]_{p_X,1-2p_X}-k\cdot\dim\CC[w_1,\ldots,w_k]_{p_X-1}=
\\=\dim\CC[x_0,\ldots,x_{k-1}]_{1}\cdot\dim\CC[w_1,\ldots,w_k]_{p_X}-k\cdot\dim\CC[w_1,\ldots,w_k]_{p_X-1}=\\
=k\big(\dim\CC[w_1,\ldots,w_k]_{p_X}-\dim\CC[w_1,\ldots,w_k]_{p_X-1}\big).
\end{multline*}
Therefore, we have
\begin{multline*}
h^{p_X,n-p_X}_{pr}(X)=\dim R_{p_X,-i_X}\ge k\cdot \left(\binom{k+p_X-1}{p_X}-\binom{k+p_X-2}{p_X-1}\right)=\\=
k\cdot\left(\binom{\frac{N}{2}}{\frac{N}{2}-k+1}-\binom{\frac{N}{2}-1}{\frac{N}{2}-k}\right)\ge k>1.  \qedhere
\end{multline*}
\end{proof}

Recall that if $X$ is a smooth Fano complete intersection
of quadrics and at least one cubic in~$\P^N$,
then
$$
r_X=3\left\lceil\frac{i_X}{3}\right\rceil-i_X\in\{0,1,2\}.
$$

We summarize the results of this section
in terms of the Hodge level~$\hh(X)$, see
Definition~\ref{definition: Hodge level}.

\begin{corollary}[Proposition \ref{proposition:Hodge-level}]
\label{corollary:h-n-2p}
Let $X$ be a smooth well formed Fano weighted
complete intersection which is not an intersection with a linear cone.
If $X$ is an odd-dimensional quadric, then $\hh(X)=0$.
Otherwise~\mbox{$\hh(X)=n-2p_X$}.
\end{corollary}

\begin{proof}
If $X$ is a complete intersection of quadrics in $\P^N$, then
the assertion follows from Lemma~\ref{lemma:int-of-quadrics}.
Otherwise it is given by
Corollary~\ref{corollary:greater-than-1} and Lemma~\ref{lemma:vanishing}.
\end{proof}

\section{Proofs of the main results}
\label{section:proofs}

In this section we prove Theorems~\ref{theorem:main}
and~\ref{theorem:main-CY}, Propositions~\ref{proposition:main-CY} and~\ref{proposition:Hodge-level},
and Corollary~\ref{corollary:Hodge-level}.
We use the notation introduced in the beginning
of~\S\ref{section:bounds}.

\begin{remark}\label{remark:Kodaira}
Let $X$ be a smooth $n$-dimensional Fano variety.
We always have $h^{0,n}(X)=0$ by Kodaira vanishing.
This implies that $X$ is always diagonal provided that $n\le 2$,
and it is always of curve type provided that $n=3$.
\end{remark}

We start with the case of complete intersections of quadrics and cubics in the usual projective space.

\begin{lemma}\label{corollary:int-of-quadrics}
Let $X\subset\P^N$ be a smooth $n$-dimensional Fano complete intersection of quadrics and
cubics (including the case when there are either no quadrics or no cubics).
Then
\begin{itemize}
\item[(i)] $X$ is $\QQ$-homologically minimal if and only if $X$ is an odd-dimensional quadric;

\item[(ii)] $X$ is diagonal if and only if $X$ is a quadric, or an even-dimensional intersection of two quadrics,
or a cubic surface;

\item[(iii)] $X$ is of curve type if and only if $X$ is either an odd-dimensional intersection of at most three quadrics, or a threefold, or a cubic fivefold;

\item[(iv)] $X$ is of $2$-Calabi--Yau type if and only if $X$ is a cubic fourfold;

\item[(v)] $X$ is of $3$-Calabi--Yau type if and only if $X$ is either a seven-dimensional cubic or a five-dimensional intersection of a quadric and a cubic.
\end{itemize}
\end{lemma}
\begin{proof}
Assertion (i) immediately follows from Corollary~\ref{corollary:h-n-2p}.
In what follows we will assume that $X$ is not an odd-dimensional quadric.
Let $X$ be a complete intersection of $k_1$ quadrics and $k_2$ cubics.
Note that in every case the sufficiency of the provided conditions is straightforward to check.

One has
$$
p_X=\left\lceil \frac{n-k_1+1}{2}\right\rceil
$$
if $k_2=0$ and
$$
p_X=\left\lceil\frac{n-k_1-2k_2+1}{3}\right\rceil
$$
otherwise.
This is less or equal to $\frac{n}{2}$, and the equality holds if and only if $X$ is a quadric, an even-dimensional intersection of two quadrics,
or a cubic surface. Thus, using Corollary~\ref{corollary:h-n-2p}, we obtain assertion~(ii).

Suppose that $X$ is of curve type.
Then $n$ is odd, and
it follows from Corollary~\ref{corollary:h-n-2p} that
$p_X=\frac{n-1}{2}$. If $k_2=0$, we conclude that $X$ is a complete intersection of at most three quadrics.
If $k_2>0$, then $X$ is either a threefold, or a cubic fivefold. This gives assertion~(iii).

Now suppose that $X$ is of $2$-Calabi--Yau type or of $3$-Calabi--Yau type.
We know from Lemma~\ref{lemma:int-of-quadrics} that $k_2>0$.
Hence it follows from Corollary~\ref{corollary:greater-than-1} that

${r_X}=0$ and $k_2=1$.
If $n$ is even (so that $X$ is of $2$-Calabi--Yau type), then
$$
p_X=\frac{n}{2}-1
$$
by Corollary~\ref{corollary:h-n-2p}.
This implies
\[
\frac{n-k_1-2k_2+1}{3}=\frac{n}{2}-1,
\]
which means that $X$ is a cubic fourfold.
If $n$ is odd (so that $X$ is of $3$-Calabi--Yau type), then
$$
p_X=\frac{n-3}{2}
$$
by Corollary~\ref{corollary:h-n-2p}.
This implies
\[
\frac{n-k_1-2k_2+1}{3}=\frac{n-3}{2},
\]
which means that $X$ is either a seven-dimensional cubic or a five-dimensional complete intersection of a quadric and a cubic.
Therefore, we obtain assertions~(iv) and~(v).
\end{proof}

\begin{lemma}\label{lemma:d-4}
Let $X$ be a smooth well formed Fano weighted complete intersection of dimension $n\ge 3$
in $\P$ which is not an intersection with a linear cone.
Suppose that~\mbox{${d_X}\ge 4$}. Then
\begin{itemize}
\item[(i)] $X$ is not diagonal (and in particular not $\QQ$-homologically minimal);

\item[(ii)] $X$ is of curve type if and only if $n=3$;

\item[(iii)] $X$ is not of $2$-Calabi--Yau type;

\item[(iv)] $X$ is of $3$-Calabi--Yau type if and only if $X$ is a five-dimensional quartic hypersurface in $\P(1^6,2)$.
\end{itemize}
\end{lemma}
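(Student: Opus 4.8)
The plan is to reduce all four assertions to the single formula $\hh(X)=n-2p_X$ of Corollary~\ref{corollary:h-n-2p}, combined with the dictionary relating the Hodge level to the properties in question and with the bound $i_X\le n$. First I would record the starting point: since $d\ge 4$, Lemma~\ref{lemma:only-quadrics}(i) shows that $X$ is not a complete intersection of quadrics, and in particular not an odd-dimensional quadric, so Corollary~\ref{corollary:h-n-2p} applies and gives $\hh(X)=n-2p_X$. Throughout I would use that $X$ is diagonal if and only if $\hh(X)\le 0$, that $X$ is of curve type if and only if $\hh(X)\le 1$, that $X$ being of $m$-Calabi--Yau type forces $\hh(X)=m$ and $n\equiv m\pmod 2$, and that $\hh(X)\le n-2$ for any Fano variety. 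The one quantitative input is Theorem~\ref{theorem:low-coindex}(ii), which gives $i_X\le n$; together with $d\ge 4$ and $i_X\ge 1$ (as $X$ is Fano) it yields $1\le p_X=\lceil i_X/d\rceil\le\lceil n/4\rceil$.

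For assertion~(i) I would observe that an elementary estimate gives $2\lceil n/4\rceil<n$ for every $n\ge 3$, hence $2p_X<n$ and $\hh(X)=n-2p_X\ge 1>0$; thus $X$ is not diagonal, and a fortiori not $\QQ$-homologically minimal. For assertion~(ii) the same estimate sharpens to $2\lceil n/4\rceil\le n-2$ once $n\ge 6$, so that $\hh(X)\ge 2$ and $X$ is not of curve type in that range; for $n=3$ the variety is of curve type by Remark~\ref{remark:Kodaira}, and the two remaining dimensions $n=4$ and $n=5$ are precisely the cases settled by the classification recorded in Remark~\ref{remark:dim-4-5}.

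For the Calabi--Yau assertions~(iii) and~(iv) the extra ingredient is Corollary~\ref{corollary:greater-than-1}. If $X$ is of $m$-Calabi--Yau type, then its lowest non-vanishing middle Hodge number sits in bidegree $(p_X,n-p_X)$ by Lemma~\ref{lemma:vanishing}, and the definition of this type forces $h^{p_X,n-p_X}(X)=1$; since $2p_X<n$ the primitive and ordinary numbers coincide, so $h_{pr}^{p_X,n-p_X}(X)=1$, and Corollary~\ref{corollary:greater-than-1} then forces $r=0$, i.e.\ $i_X=p_X d$. Substituting $p_X=(n-m)/2$ (read off from $\hh(X)=m$) I would get $i_X=p_X d=(n-m)d/2\ge 2(n-m)$, and $i_X\le n$ then yields $n\le 2m$.

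For $m=2$ this combines with $n$ even and $\hh(X)=2\le n-2$ (so $n\ge 4$) to force $n=4$, and for $m=3$ with $n$ odd and $\hh(X)=3\le n-2$ (so $n\ge 5$) to force $n=5$. In both cases Remark~\ref{remark:dim-4-5} closes the argument: it excludes $2$-Calabi--Yau type in dimension $4$, proving~(iii), and in dimension $5$ it gives, in both directions, that the quartic hypersurface in $\P(1^6,2)$ is the only $3$-Calabi--Yau example, proving~(iv). The main obstacle is exactly that these clean inequalities degenerate in dimensions $4$ and $5$, so the proof cannot be made self-contained there and must invoke the low-dimensional classification packaged in Remark~\ref{remark:dim-4-5}; everything else is bookkeeping with $\hh(X)=n-2p_X$ and $i_X\le n$.
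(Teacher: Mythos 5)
Your proposal is correct and follows essentially the same route as the paper's proof: both rest on $\hh(X)=n-2p_X$ from Corollary~\ref{corollary:h-n-2p}, the bound $i_X\le n$ from Theorem~\ref{theorem:low-coindex}(ii), the divisibility $d\mid i_X$ extracted from Corollary~\ref{corollary:greater-than-1} for the Calabi--Yau cases, and the reduction of dimensions $n\le 5$ to Remarks~\ref{remark:Kodaira} and~\ref{remark:dim-4-5}. The only cosmetic difference is that you derive $n\le 2m$ from $i_X=(n-m)d/2\le n$ while the paper writes the equivalent inequality $m\ge n/2$, and you handle the low-dimensional cases at the end rather than excluding them at the outset.
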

\begin{proof}
Recall that $i_X\le n$ by Theorem~\ref{theorem:low-coindex}(ii). Therefore, by Corollary~\ref{corollary:h-n-2p} one has
\begin{equation}\label{eq:h}
\hh(X)=n-2\left\lceil\frac{i_X}{{d_X}}\right\rceil\ge n-2\left\lceil\frac{n}{4}\right\rceil.
\end{equation}
Since $n\ge 3$, we see that $\hh(X)>0$, which implies assertion~(i).

To prove the remaining assertions, we
can restrict to the case $n>5$.
Indeed, for~\mbox{$n=4$} and $n=5$ it follows from Remark~\ref{remark:Kodaira} and the classification of such varieties
(see~\mbox{\cite[\S5]{PrzyalkowskiShramov-Weighted}}, cf.~\mbox{\cite[Proposition~2.2.1]{Kuchle-Geography}})
that $X$ is not of curve type and not of $2$-Calabi--Yau type;
moreover,~$X$ is of $3$-Calabi--Yau type if and only if~$X$ is a five-dimensional quartic hypersurface in~\mbox{$\P(1^6,2)$}.
We deduce from~\eqref{eq:h} that
$$
\hh(X)\ge \frac{n}{2}-2,
$$
which means that for $n>5$ the variety $X$ is not of curve type.
This proves assertion~(ii).

Now suppose that $X$ is of $m$-Calabi--Yau type for some $m$.
Then we know from Corollary~\ref{corollary:greater-than-1} that $i_X$ is divisible by ${d_X}$,
so that~\eqref{eq:h} reads
$$
m=\hh(X)= n-\frac{2i_X}{{d_X}}\ge n-\frac{2n}{4}= \frac{n}{2}.
$$
This implies assertion~(iii), and shows that one can have $\hh(X)=3$ only for
$n=6$. However, if $X$ is of $3$-Calabi--Yau type, its dimension must be odd by definition,
and thus we obtain assertion~(iv) as well.
\end{proof}

Now we are ready to prove our main results.

\begin{proof}[Proof of Theorem~\ref{theorem:main} and Proposition~\ref{proposition:main-CY}]
By Lemma~\ref{lemma:small-dimension}
we can assume that~\mbox{$\dim X\ge 3$}. Therefore, by Lemma~\ref{lemma:d-4}
we can assume that $d_X\le 3$. According to Lemma~\ref{lemma:only-quadrics}, this means
that $X$ is a complete intersection of quadrics and cubics in~\mbox{$\P=\P^N$}.
Now the required assertions follow from Lemma~\ref{corollary:int-of-quadrics}.
\end{proof}

\begin{proof}[Proof of Theorem~\ref{theorem:main-CY}]
The Hodge level $\hh(X)$ is computed by Corollary~\ref{corollary:h-n-2p}. Thus,
the assertion follows from Corollary~\ref{corollary:greater-than-1}
and Lemma~\ref{lemma:int-of-quadrics}.
\end{proof}

Note also that one can obtain an alternative proof of Proposition~\ref{proposition:main-CY}
using Theorem~\ref{theorem:main-CY}.

The assertion of Proposition~\ref{proposition:Hodge-level} is given by Corollary~\ref{corollary:h-n-2p}.
Finally, we prove Corollary~\ref{corollary:Hodge-level}.

\begin{proof}[Proof of Corollary~\ref{corollary:Hodge-level}]
Suppose that the assumptions of assertion~(i) hold. Then $X$ is not an
odd-dimensional quadric.
Hence
$$
\hh(X)= n-2\left\lceil\frac{i_X}{{d_X}}\right\rceil
$$
by Corollary~\ref{corollary:h-n-2p}.
In particular, one has $\hh(X)=n-2$ if $i_X\le 2$.
If $i_X\ge 3$ and $X$ is not a complete intersection of quadrics, then
$d_X\ge 3$ by Lemma~\ref{lemma:only-quadrics}(i), and one also has~\mbox{$\hh(X)=n-2$}. If $i_X\ge 4$ and $X$ is not a complete intersection of quadrics and cubics, then
$d_X\ge 4$ by Lemma~\ref{lemma:only-quadrics}, and again
$\hh(X)=n-2$.

Now suppose that $X$ is not a complete intersection of quadrics in a projective space.
Then~\mbox{${d_X}\ge 3$} by Lemma~\ref{lemma:only-quadrics}(i).
Therefore, Corollary~\ref{corollary:h-n-2p}  and Theorem~\ref{theorem:low-coindex}(ii)
give
$$
\hh(X)= n-2\left\lceil\frac{i_X}{{d_X}}\right\rceil\ge n-2\left\lceil\frac{n}{3}\right\rceil.
$$
Since $\left\lceil\frac{n}{3}\right\rceil\le\frac{n+2}{3}$, we obtain
$$
\hh(X)\ge n-2\left(\frac{n+2}{3}\right)=\frac{n-4}{3},
$$
which gives assertion~(ii).
\end{proof}

\section{Quasi-smooth complete intersections}
\label{section:quasi-smooth}

In this section we briefly discuss quasi-smooth weighted complete intersections.

\begin{definition}[{see~\cite[Definition 6.3]{IF00}}]
\label{definition: quasi-smoothness}
Let $\pi\colon \mathbb A^{N+1}\setminus \{0\}\to \P$ be the natural projection to a weighted projective space. A subvariety $X\subset \P$
is called \emph{quasi-smooth} if $\pi^{-1}(X)$ is smooth.
\end{definition}

The following result allows one to check quasi-smoothness of weighted hypersurfaces in terms of weights and degrees.

\begin{theorem}[{\cite[Theorem 8.1]{IF00}}]
\label{theorem:quasi-smooth hypersurfaces}
Let $X$ be a general hypersurface in $\P(a_0,\ldots,a_N)$ of degree $d$, which is not an intersection with a linear cone.
Then $X$ is quasi-smooth if and only if for any $0\le s\le N$ and for any non-empty subset
$$
I=\{i_0,\ldots,i_s\}\subset \{0,\ldots,N\}
$$
there either exists a monomial $x_{i_0}^{m_0}\cdot\ldots\cdot x_{i_s}^{m_s}$ of degree $d$,
or for any $\mu=1,\ldots,k+1$ there exist monomials $x_{i_0}^{m_{0,e_\mu}}\cdot\ldots\cdot x_{i_{s}}^{m_s,e_\mu} x_{e_\mu}$
of degree $d$ for $s+1$ distinct variables $e_\mu$.
\end{theorem}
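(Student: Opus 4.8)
The plan is to reduce quasi-smoothness to a statement about the common zeros of the partial derivatives of the defining polynomial, and then to analyze these zeros stratum by stratum on the coordinate strata of $\mathbb{A}^{N+1}$. Write $X=\{f=0\}$, where $f$ is weighted homogeneous of degree $d$. By Definition~\ref{definition: quasi-smoothness}, $X$ is quasi-smooth precisely when its affine cone $\pi^{-1}(X)$ is smooth. First I would invoke the Euler relation
\[
\sum_{i=0}^N a_i x_i \frac{\partial f}{\partial x_i}=d\,f,
\]
which shows that any point of $\mathbb{A}^{N+1}$ at which all partials vanish automatically lies on $\{f=0\}$. Hence $X$ is quasi-smooth if and only if the gradient $\nabla f=(\partial f/\partial x_0,\ldots,\partial f/\partial x_N)$ has no zero in $\mathbb{A}^{N+1}\setminus\{0\}$. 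I would then stratify by the coordinate strata $T_I=\{x:x_i\neq 0\Leftrightarrow i\in I\}$, indexed by non-empty subsets $I\subseteq\{0,\ldots,N\}$, so that quasi-smoothness becomes the condition that $\nabla f$ avoids every $T_I$.

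Next I would fix $I=\{i_0,\ldots,i_k\}$ and compute the restriction of each partial to $T_I$. Writing $f_I$ for the sum of the monomials of $f$ supported on $I$, one checks that for $i\in I$ the restriction $(\partial f/\partial x_i)\vert_{T_I}$ equals $\partial f_I/\partial x_i$, while for $j\notin I$ the restriction $(\partial f/\partial x_j)\vert_{T_I}$ is the form $g_j$ coming from the monomials of $f$ of the shape $x^{M}x_j$ with $M$ supported on $I$. Because $f$ is general, $f_I$ is a general degree-$d$ form in the variables $x_i$ ($i\in I$) as soon as at least one degree-$d$ monomial is supported on $I$ — this is exactly condition (a) — and each $g_j$ is a general section of $\mathcal{O}(d-a_j)$ on the weighted subspace $P_I=\P(a_i:i\in I)$, non-zero exactly when a monomial $x^{M}x_j$ of degree $d$ with $M$ on $I$ exists. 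Passing to the open torus $U_I\subset P_I$ (the image of $T_I$, of dimension $k$), a zero of $\nabla f$ on $T_I$ corresponds to a common zero on $U_I$ of the forms $\partial f_I/\partial x_i$ together with the $g_j$.

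Now I would split into the two cases of the statement. If condition (a) holds, then the Euler relation applied to $f_I$ shows that the common zero locus of the $\partial f_I/\partial x_i$ on $U_I$ lies inside $\{f_I=0\}$; since the system of degree-$d$ forms is base-point-free on the torus $U_I$, Bertini implies that for general $f_I$ the hypersurface $\{f_I=0\}$ is smooth along $U_I$, so the $I$-partials already have no common zero there and $\nabla f$ avoids $T_I$. If condition (a) fails, then $f_I\equiv 0$ and the $I$-partials vanish identically, so $\nabla f$ is zero on $T_I$ exactly when the non-zero $g_j$ have a common zero on $U_I$. Let $E$ be the set of $j$ with $g_j\not\equiv 0$; these are precisely the distinct variables $e$ admitting a monomial $x^{M}x_e$ of degree $d$ with $M$ on $I$. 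Since $\dim U_I=k=|I|-1$, general members of at least $|I|$ base-point-free systems have empty common zero on $U_I$, whereas at most $|I|-1$ of them necessarily leave a non-empty common zero meeting the torus; this is exactly condition (b), namely the existence of $|I|=k+1$ distinct such variables $e_\mu$.

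The main obstacle I anticipate is making the two dimension counts on $U_I$ genuinely rigorous, since $U_I$ is the non-compact torus rather than the projective space $P_I$. In the direction establishing quasi-smoothness one must confirm base-point-freeness of the relevant systems on $U_I$ and apply a Bertini-type argument valid over this open set. In the converse direction, where at most $|I|-1$ general sections are available, one must show that the resulting common zero locus, which has the expected dimension in $P_I$, actually has a component meeting $U_I$ rather than escaping entirely into the toric boundary $P_I\setminus U_I$. Compactifying and carefully tracking how the base loci and intersections interact with this boundary is the technical heart of the argument; once it is controlled, running the stratum-by-stratum analysis over all non-empty $I$ yields the stated equivalence.
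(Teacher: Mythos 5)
The paper does not actually prove this statement: it is quoted (with some typos in the indexing) from \cite[Theorem~8.1]{IF00}, so there is no internal proof to compare against. Your strategy --- Euler relation, reduction to the non-vanishing of $\nabla f$ on $\mathbb{A}^{N+1}\setminus\{0\}$, stratification by coordinate tori, and a genericity analysis of $f_I$ and the $g_j$ on each stratum --- is the standard route and is essentially how Iano-Fletcher argues. Two remarks. First, the obstacle you single out as the ``technical heart'' of the necessity direction dissolves: you do not need the common zero locus of the $g_j$ to meet the open torus $U_I$. If condition (a) fails then $f_I\equiv 0$, so for $i\in I$ the restriction of $\partial f/\partial x_i$ to the \emph{closed} stratum $\overline{T_I}$ is identically zero, while for $j\notin I$ that restriction is exactly $g_j$. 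Hence any common zero of the at most $k$ non-trivial $g_j$'s anywhere on $\overline{T_I}\setminus\{0\}$ --- non-empty by the projective dimension theorem applied to at most $k$ divisors on the $k$-dimensional $\P(a_i : i\in I)$ --- is a singular point of the affine cone, whichever smaller stratum it lies on; no boundary bookkeeping is needed, and in fact \emph{every} such $f$ fails to be quasi-smooth. Second, in the sufficiency direction you should justify why the $g_j$ for distinct $j\notin I$ may be taken independently general: each degree-$d$ monomial of $f$ containing exactly one variable outside $I$, to the first power, contributes to exactly one $g_j$ (monomials with two or more factors from outside $I$ die on $T_I$ after differentiation), so distinct $g_j$ involve pairwise disjoint sets of coefficients of $f$. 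Granting that, your count --- $k+1$ general members of linear systems base-point-free on the $k$-dimensional torus have empty common zero locus --- is a legitimate Bertini-type induction, and your case (a) argument (base-point-freeness of the system of $f_I$'s on the torus, Bertini in characteristic zero, plus the Euler relation for $f_I$) is correct. Finally, note that the statement as printed is slightly garbled: with $|I|=k+1$ one needs $k+1$ distinct variables $e_\mu$, which is how you (correctly) read it.
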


Note that there is a pure Hodge structure on the cohomology of quasi-smooth weighted complete
intersections.
Theorem~\ref{theorem:middle-Hodge-numbers} actually holds in the quasi-smooth case, see~\mbox{\cite[Theorem~3.6]{Ma99}}.

Let us consider some examples from the Alexeev's list (see Remark~\ref{remark:quasismooth}).

\begin{example}
\label{example:quasismooth-4-1}
Let $X$ be a general hypersurface of degree $4$ in $\P(1,1,1,1,2,2)$. Then $X$ is a well formed Fano hypersurface, and $X$ is quasi-smooth by Theorem~\ref{theorem:quasi-smooth hypersurfaces}.
One has
$$
h^{1,3}(X)=\dim\big(R_{1,-4}\big)=1
$$
by Theorem~\ref{theorem:middle-Hodge-numbers}.
Thus, $X$ is of K3 type. Moreover,
by~\mbox{\cite[Corollary~4.2]{Kuz15b}}, the derived category of coherent sheaves on $X$ has a semiorthogonal decomposition on
a category of~K3 type and three exceptional objects.
\end{example}

\begin{example}
\label{example:quasismooth-6}
Let $X$ be a general hypersurface of degree $16$ in $\P(1, 4, 5, 6, 8, 8, 8, 8)$.  Then~$X$ is a well formed Fano hypersurface, and $X$ is quasi-smooth by Theorem~\ref{theorem:quasi-smooth hypersurfaces}.
One has
$$
h^{2,4}(X)=\dim\big(R_{2,-32}\big)=1
$$
by Theorem~\ref{theorem:middle-Hodge-numbers}.
Thus, $X$ is of K3 type. Moreover,
in the same way as in Example~\ref{example:quasismooth-4-1}, the derived category of coherent sheaves on $X$
contains a category of K3 type.
\end{example}

\begin{example}
\label{example:quasismooth-4-2}
Let $X$ be a hypersurface of degree $6$ in $\P(1,1,1,3,3,3)$. Then~$X$ is a well formed Fano hypersurface, and $X$ is quasi-smooth by Theorem~\ref{theorem:quasi-smooth hypersurfaces}.
One has
$$
h^{1,3}(X)=\dim\big(R_{1,-6}\big)=1
$$
by Theorem~\ref{theorem:middle-Hodge-numbers}.
Thus, $X$ is of K3 type. Moreover,
in the same way as in Example~\ref{example:quasismooth-4-1}, the derived category of coherent sheaves on $X$
contains a category of K3 type.
\end{example}

\section{Discussion}
\label{section:discussion}

In this section we discuss some remaining questions and problems.

Theorem~\ref{theorem:main} gives non-vanishing of some middle Hodge numbers for Fano weighted complete intersections.
This non-vanishing holds for trivial reasons for Calabi--Yau varieties, since for $n$-dimensional Calabi--Yau
variety $X$ one has $h^{0,n}(X)=1$. Calculations in some examples shows that
$h^{0,n}(X)>0$ for many smooth $n$-dimensional weighted complete intersections $X$ of general type.
On the other hand, if one drops the assumption that $X$ is a weighted complete intersection, there are plenty of examples
of $\QQ$-homologically minimal surfaces of general type known as \emph{fake projective planes}, see, for example,~\cite{Mu79},~\cite{CS10}.
Also, there are examples of $\QQ$-homologically minimal fourfolds of general type, see~\cite{PG06}.

\begin{question}
Do there exist smooth weighted complete intersections of general type that are $\QQ$-homologically minimal, diagonal, of curve type, or of $m$-Calabi--Yau type for small~$m$? Is it true that $h^{0,n}(X)>0$ for all smooth $n$-dimensional weighted complete intersections~$X$ of general type?
\end{question}

In all examples of smooth Fano weighted complete intersections we have considered middle Hodge numbers
that are closer to the center of the Hodge diamond are larger than those that are further from the center: for
an $n$-dimensional variety $X$ one has~\mbox{$h^{p,n-p}(X)\leqslant h^{q,n-q}(X)$} if~\mbox{$p<q\le \frac{n}{2}$}.
This is not true for some Calabi--Yau varieties: there are \emph{rigid Calabi--Yau threefolds}, that are ones with $h^{1,2}(X)=0$, while
$h^{3,0}(X)=1$, see, for example,~\cite{Og96}. However we do not know rigid Calabi--Yau weighted complete intersections.

\begin{question}
Let $X$ be a smooth $n$-dimensional weighted complete intersection. Is it true that $h^{p,n-p}(X)\leqslant h^{q,n-q}(X)$ if $p<q\le \frac{n}{2}$?
If not, is this true if we also assume that~$X$ is a Fano variety, or a Calabi--Yau variety?
\end{question}

There is a generalization of the construction of Fano varieties as complete intersections.
Namely, one can consider zeros of
sections of equivariant vector bundles on Grassmannians or partial flag manifolds,
 see, for instance,~\cite{Kuchle-Grass},~\cite{Kuz15},~\cite{Kuz16}.

\begin{problem}
Bound Hodge numbers and compute Hodge level for such varieties.
\end{problem}

From the point of view of classification of Fano varieties
the most complicated and interesting case is the case of Fano index $1$ and Picard rank $1$.
By Corollary~\ref{corollary:Hodge-level}(i), for an $n$-dimensional weighted complete intersection $X$
of this type,
one has
$\mbox{$h^{1,n-1}(X)>0$}$. There is an approach to computing this number via Mirror Symmetry.
That is, one can construct Calabi--Yau compactified (toric) Landau--Ginzburg models
for a big class of Fano varieties (see~\cite{Prz13},~\cite{PSh14},~\cite{Prz16},~\cite{Prz17}),
cf. also the notion of a tame compactified Landau--Ginzburg model in~\cite{KKP17}.
Given such a Landau--Ginzburg model~$LG_X$, one can define the number $k_{LG_X}$ as the number
of irreducible components of reducible fibers of $LG_X$ minus the number of the reducible fibers.
It is expected that
one has~\mbox{$k_{LG_X}=h^{1,n-1}(X)$}. This
is proved for Picard rank $1$ Fano threefolds in~\cite{Prz13} and for Fano complete intersections
in usual projective spaces in~\cite{PSh15a}. 

This approach can be applied to any smooth Fano variety, not necessary to a weighted complete intersection,
provided its weak Landau--Ginzburg model is known. Most of the known constructions of Picard rank $1$ Fano varieties
are via complete intersections in weighted projective spaces or Grassmannians.
Fortunately, there are constructions of weak Landau--Ginzburg models for some of them, for instance, for weighted
complete intersections of Cartier divisors (see~\cite{Prz11}) or of two hypersurfaces (see~\cite{PSh17b}),
and for complete intersections in Grassmannians (see~\cite{BCFKS98},~\cite{PSh14},~\cite{PSh15b},~\cite{PSh17a});
these constructions are based on Givental's approach~\cite{Gi97}.

\begin{problem}
Construct a Calabi--Yau compactification $LG_X$ of a weak Landau--Ginzburg model for a smooth Fano weighted complete intersection $X$
of dimension $n$ and compute $k_{LG_X}$.
The fact that $k_{LG_X}>0$ (that is, that the Landau--Ginzburg model has a reducible fiber)
is conjecturally equivalent to the fact that~\mbox{$\hh(X)=n-2$}.
\end{problem}

\begin{remark}
\label{remark:minimality}
Except for $\QQ$-homological minimality there are other notions of minimality for Fano varieties. For instance, in~\cite{Prz07b} (see also~\cite{Prz07a}) the notion of \emph{quantum minimality} was introduced:
a Fano variety is quantum minimal if the subring in cohomology generated by the anticanonical class is
closed with respect to quantum multiplication; in other words, it contains the minimal possible subring of small quantum cohomology
ring containing the anticanonical class. Note that by~\mbox{\cite[Lemma~5.5]{Ga02}}
(see also~\cite{Prz07a}) all smooth Fano weighted complete intersections
are quantum minimal.
\end{remark}

\appendix
\section{Smooth Fano weighted complete intersections of small dimension}
\label{section:tables}

In Tables~\ref{table:dim 2} and~\ref{table:dim 3}
we list smooth well formed Fano weighted complete intersections that are not
intersections with linear cones and have dimensions $2$ and $3$, respectively; according to the convention of~\S\ref{section:preliminaries},
we exclude projective spaces from these lists. The
classification of del Pezzo surfaces is well known, as well as the classification
of Picard rank 1 Fano threefolds (see~\cite{Is77}). On the other hand,
by the Lefschetz-type theorem smooth well formed weighted complete intersections of dimension at least $3$ in weighted
projective spaces have Picard rank~$1$, see~\cite[Proposition~1.4]{Ma99},
so the three-dimensional case indeed can be extracted from~\cite{Is77}.
An alternative way to compile our lists is to use the bounds for discrete invariants of
weighted projective spaces and Fano weighted complete intersections therein provided
by Theorem~\ref{theorem:Fano-invariants}.
An advantage of this approach is that one does not need to check that certain Fano varieties cannot be re-embedded
into weighted projective spaces as weighted complete intersections.

For every line in each of the two tables, in the first column we put the ambient weighted projective space,
in the second one we list degrees
of hypersurfaces that determine our weighted complete intersection, and in the third one we put the
unique non-trivial Hodge number, that is, $h^{1,1}$ in the two-dimensional case and $h^{1,2}$ in the three-dimensional
case.

\begin{table}[h]
\centering
\begin{tabular}{||c|c|c||}
\hline
\hline
$\P$ & Degrees & $h^{1,1}$ \\
\hline
\hline
$\P(1^2,2,3)$ & 6 & $9$ \\
\hline
$\P(1^3,2)$ & 4 & $8$ \\
\hline
$\P^3$ & 3 & $7$ \\
\hline
$\P^4$ & 2,2 & $6$ \\
\hline
$\P^3$ & 2 & $2$ \\
\hline
\hline
\end{tabular}
\caption{Del Pezzo surfaces}\label{table:dim 2}
\end{table}

\begin{table}[h]
\centering
\begin{tabular}{||c|c|c||}
\hline
\hline
$\P$ & Degrees & $h^{1,2}$ \\
\hline
\hline
$\P(1^4,3)$ & 6 & $52$ \\
\hline
$\P^4$ & 4 & $30$ \\
\hline
$\P^5$ & 2,3 & $20$ \\
\hline
$\P^6$ & 2,2,2 & $14$ \\
\hline
$\P(1^3,2,3)$ & 6 & $21$ \\
\hline
$\P(1^4,2)$ & 4 & $10$ \\
\hline
$\P^4$ & 3 & $5$ \\
\hline
$\P^5$ & 2,2 & $2$ \\
\hline
$\P^4$ & 2 & $0$ \\
\hline
\hline
\end{tabular}
\caption{Fano threefolds}\label{table:dim 3}
\end{table}

\end{document}